%
%
%


\documentclass{amsproc}

\usepackage{amssymb}

\usepackage{graphicx}

\usepackage[cmtip,all]{xy}

\usepackage{hyperref}
\usepackage{mathrsfs}
\usepackage{enumerate}

\newcommand{\N}{{\mathbb N}}

\newcommand{\R}{{\mathbb R}}

\newcommand{\sfG}{\mathsf{G}}

\newcommand{\sfJ}{\mathsf{J}}

\newcommand{\sfX}{\mathsf{X}}
\newcommand{\subspX}{\sfX}

\newcommand{\Hom}{\operatorname{Hom}}

\newcommand{\Tot}{\operatorname{Tot}}

\newcommand{\supp}{\operatorname{supp}}

\newcommand{\calE}{\mathcal{E}}

\newcommand{\calA}{\mathcal{A}}

\newcommand{\calF}{\mathcal{F}}

\newcommand{\calJ}{\mathcal{J}}

\newcommand{\calC}{\mathcal{C}}
\newcommand{\calO}{\mathcal{O}}

\newcommand{\scrA}{\mathscr{A}}

\newcommand{\scrC}{\mathscr{C}}
\newcommand{\scrD}{\mathscr{D}}
\newcommand{\scrE}{\mathscr{E}}
\newcommand{\scrH}{\mathscr{H}}
\newcommand{\scrI}{\mathscr{I}}
\newcommand{\scrJ}{\mathscr{J}}

\newcommand{\im}{\operatorname{im}}
\newcommand{\Spec}{\operatorname{Spec}}
\newcommand{\hatotimes}{\hat{\otimes}}
\newcommand{\hscrC}{\hat{\scrC}}

\newcommand{\doublegroupoid}[4]{\xymatrix{#1  \ar@<2pt>[d] \ar@<-2pt>[d] & #2 \ar@<2pt>[l] 
\ar@<-2pt>[l] \ar@<2pt>[d] \ar@<-2pt>[d] \\ #3  & #4 \ar@<2pt>[l] \ar@<-2pt>[l]}}


\newtheorem{theorem}{Theorem}[section]
\newtheorem{lemma}[theorem]{Lemma}
\newtheorem{proposition}[theorem]{Proposition}
\newtheorem{conjecture}[theorem]{Conjecture}

\theoremstyle{definition}
\newtheorem{definition}[theorem]{Definition}

\theoremstyle{remark}
\newtheorem{remark}[theorem]{Remark}

\numberwithin{equation}{section}

\begin{document}

\title{Localization in Hochschild homology}


\author[M.J. Pflaum]{Markus J. Pflaum}
\address{Department of Mathematics,
  University of Colorado,
  Boulder CO 80309,
  USA}
\email{markus.pflaum@colorado.edu}
\thanks{The author gratefully acknowledges support for this work by the National Science Foundation under
  Grant No.~DMS-1105670 and by the Simons Foundation under Grant No.~359389.}

\subjclass[2010]{Primary 16E40, 14P99, 18G99, 55N30}
\keywords{Hochschild homology, localization, cyclic homology}
\date{\today}

\dedicatory{Dedicated to the 40th birthday of cyclic cohomology}

\begin{abstract}
%
Localization methods are ubiquitous in cyclic homology theory,
but vary in detail and are used in different  scenarios.
In this paper we will elaborate on a common feature of localization methods
in noncommutative geometry, namely sheafification of the algebra under consideration
and reduction of the computation to the stalks of the sheaf. The novelty of our approach
lies in the methods we use which mainly stem from real instead of complex algebraic geometry. 
We will then indicate how this method can be used to determine the
Hochschild homology theory of more complicated algebras out of simpler ones.

\end{abstract}

\maketitle

%

\section*{Introduction}
Since the invention of cyclic homology theory 40 years ago, powerful methods have been
developed to compute the Hochschild and cyclic homology of a given algebra.
They range from
passing to more tractable resolutions of the algebra like the Connes--Koszul resolution
\cite{ConNDG} over to the localization methods by Loday \cite{LodCHS}, Burghelea \cite{BurCHGR}, Brylinski \cite{BryCLHH},
Weibel \cite{WeiCHS}, Connes--Moscovici \cite{ConMosCCNCHG}, Teleman \cite{TelMHH,TelLHHCFA},
Nistor \cite{NisCCCPAG}, Brasselet-Pflaum \cite{BraPflHAWFSS} and others.
Localization essentially amounts to the sheafification of the Hochschild chain
complex of the algebra under consideration. The idea is to reduce the computation of the ``global''
Hochschild or cyclic homology to ``local'' ones at the stalks. In many cases,
the homologies of the stalks are easier to compute. One then has to glue the resulting homologies
to a global one and thus  obtains under reasonable assumptions the homology of the originally given algebra. 
In the case of algebraic varieties,
a sheafification approach has been originally suggested by Loday \cite[3.4]{LodCHS} and worked out by
Weibel--Geller \cite{WeiGelEDHCH} and Weibel \cite{WeiCHS} to define the Hochschild homology
of algebraic varieties and schemes. Note that in the case of schemes using a sheaf theoretic
version of the Hochschild chain complex is indispensible since the algebra of global
sections of the structure sheaf might be trivial, hence so would be its Hochschild homology
if it were defined solely through the algebra of global sections.
In the case of smooth manifolds or stratified spaces endowed with a structure sheaf of smooth functions on it, Teleman \cite{TelMHH,TelLHHCFA} used smooth partitions of unity and cutoff functions around the
diagonal  to construct a chain complex quasi-isomorphic to the
Hochschild chain complex. Referring to its construction the resulting 
complex has been called the \emph{diagonal complex}. In many situations it
is significantly easier to handle than the original Hochschild chain complex. 

Here we propose a sheafification method for the Hochschild chain complex of the global
section algebra of a differentiable space or more generally of a sheaf of bornological
algebras over a differentiable space. In a certain sense it can be understood as a combination
of the approaches by Weibel and Teleman. In the first section of this paper we provide a rather detailed
introduction to differentiable spaces from the point of view of real algebraic geometry. We
mostly follow \cite{NavSanDS} and supplement it by material from \cite{PflAGSSS}.
In the second section we prove a localization result related to but stronger than the ones from \cite{TelMHH,TelLHHCFA,BraPflHAWFSS}
and \cite{PflPosTanHochschild}. More precisely, we construct a fine sheaf complex which we call the 
diagonal sheaf complex and which in degree $k$ consists of relative Whitney fields concentrated around the
diagonal of the $k$-the power of the differentiable space. The chain complex of global sections of the diagonal sheaf complex turns out
to be quasi-isomorphic to the original Hochschild chain complex.
This allows to reduce the computation of the Hochschild homology of the original algebra to  the computation of the Hochschild homologies of the stalks which are often much easier to determine. 

Applications of our localization method are given in the last section and an
outlook is provided how it can help unravelling the singularity structure of differentiable stratified spaces.
Finally, localization in cyclic homology is briefly addressed.

\section{Preliminaries from real algebraic geometry}
\label{sec-preliminaries}
In this section we introduce a category of commutative locally $\R$-ringed
spaces which we call differentiable spaces and for which we want to provide
localization methods to examine their Hochschild homology. In addition, we
recall some notions from the theory of Whitney fields needed in the sequel
to construct localized chain complexes. 

Locally, the reduced versions of the ringed spaces we study look like a
closed subset 
$X\subset \R^n$ of some Euclidean space with structure sheaf $\calC^\infty_{|X}$
given by the sheaf of smooth functions restricted to $X$. 
More precisely, for every relatively open subset $U\subset X$ 
the section space $\calC^\infty_{|X} (U)$ coincides with the quotient algebra
$\calC^\infty (\widetilde{U})/\calJ_X (\widetilde{U})$ where
$\widetilde{U} \subset \R^n$ is an open subset such that $\widetilde{U}\cap X = U$ and
where $\calJ_X (\widetilde{U})$ denotes the closed ideal of all smooth functions
on $\widetilde{U}$ vanishing on $X$. 
For any unital commutative $\R$-algebra $A$ let $\Spec_\R (A)$
be its \emph{real spectrum} that is the space  $\Hom_{\R\textup{-alg}} (A,\R)$ of
algebra homomorphisms from $A$ to $\R$ endowed with the Gelfand topology.
The canonical projection
$p : \calC^\infty (\R^n) \to \calC^\infty (X)$ then induces a closed embedding of real spectra
$p^* : \Spec_\R ( \calC^\infty (X)) \to \Spec_\R \big( \calC^\infty (\R^n) \big)$ with
image being the subset $X$ 
under the identification $\Spec_\R (\calC^\infty (\R^n) \cong \R^n$.

To allow for non-reduced versions of the local models we
follow \cite{NavSanDS} and  call a commutative locally $\R$-ringed space $(X,\calO)$ an
\emph{affine differentiable space} if the following conditions hold true.
\begin{enumerate}[(DS1)]
\item\label{ite:ADSembedding}
  There exists a homeomorphism  $\varphi :X \to \subspX \subset \R^n$  onto a closed subspace
  $\subspX\subset \R^n$.
\item\label{ite:ADSepimorphism}
   There exists an epimorphism of sheaves 
   $\varphi^\sharp: \calC^\infty_{\R^n} \to \varphi_*\calO $. 
\item\label{ite:ADSspectra}
  The embedding of real spectra 
  \[ (\varphi^\sharp_{\R^n})^*: \Spec_\R (\calO(X)) \to \Spec_\R \big(\calC^\infty(\R^n)\big) \cong \R^n\]
  induced by the
  projection $\varphi^\sharp_{\R^n} : \calC^\infty(\R^n) \to \calO (X)$ has image $\subspX = \varphi (X)$. 
\item\label{ite:ADSlocalization}
  For each open $U\subset X$, the section algebra $\calO (U)$ coincides with the localization
  of $\calO (X)$ by $S_U := \{ g\in \calO (X) \mid (\varphi^\sharp_{\R^n})^*(v) \neq 0 \text{ for all } v \in \subspX \}$. 
\end{enumerate}
Conditions (DS\ref{ite:ADSembedding}) and (DS\ref{ite:ADSepimorphism}) entail  that the pair
$(\varphi,\varphi^\sharp): (X,\calO ) \to \big( \R^n, \calC^\infty \big)$ is a morphism, 
more precisely even a closed embedding of locally $\R$-ringed spaces.
Following \cite[Sec.~1.3]{PflAGSSS}, we call a closed embedding
$(\varphi,\varphi^\sharp): (X,\calO ) \to \big( \R^n, \calC^\infty \big)$
such that the conditions (DS\ref{ite:ADSspectra}) and (DS\ref{ite:ADSlocalization}) hold true
a \emph{singular chart} of the affine differentiable space $(X,\calO )$.  We will usually write
$\varphi :X \to \subspX \subset \R^n$ for the underlying map to denote that its image is $\subspX$.
In case the affine differentiable space $(X,\calO )$ is \emph{reduced} meaning that $\calO$ is a
sheaf of real valued continuous functions on $X$,  the sheaf morphism $\varphi^\sharp$ is
uniquely determined by $\varphi$ and given by pullback with $\varphi$.
More precisely, over $V\subset \R^n$ open it has the form
\[
  \varphi^\sharp_V: \calC^\infty (V) \to \calO (\varphi^{-1} (V)),
  \quad f\mapsto f\circ \varphi|_{\varphi^{-1} (V)} \ . 
\]
A \emph{reduced affine differentiable space} is therefore exactly  
a commutative locally $\R$-ringed space $(X,\calO )$ isomorphic to
$\big( \subspX , \calC^\infty_{|\subspX} \big)$ for some closed $\subspX\subset \R^n$.
In the reduced case we often just call the embedding
$\varphi :X \to \subspX \subset \R^n$  inducing the 
isomorphism  a \emph{singular chart} for $(X,\calC^\infty )$. Moreover, we denote the
structure sheaf of a reduced (affine) differentiable space by $\calC^\infty_X$ instead of
$\calO$ and call the  elements of a section space $\calC^\infty_X (U) $ the
\emph{smooth functions} on the open set $U\subset X$. 

Gluing together affine differentiable spaces gives rise to the category we are interested in.
More precisely, a commutative locally $\R$-ringed space $(X,\calO)$ is called a \emph{differentiable space} if
it possesses an \emph{atlas} consisting of (local) singular charts that is if there exists a family
$(\varphi_i,\varphi^\sharp_i)_{i\in I}$ of continuous maps $\varphi_i :U_i \to \R^{n_i}$
and sheaf morphisms $\varphi^\sharp_i : \calC^\infty_{\R^{n_i}} \to (\varphi_i)_* \calO_{|U_i}$
with the following properties.
\begin{enumerate}[(DS1)]
\setcounter{enumi}{4}
\item The family $(U_i)_{i\in I}$ is an open cover of $X$.
\item Each of the maps    $\varphi_i :U_i \to \R^{n_i}$ is a homeomorphism onto a closed
      subset of $\R^{n_i}$.
\item The commutative locally $\R$-ringed space $(U_i,\calO_{|U_i})$ is an affine differentiable space
      with singular chart
      $(\varphi_i,\varphi^\sharp_i) : (U_i,\calO_{|U_i}) \to \big( \R^{n_i} ,\calC^\infty_{\R^{n_i}}\big)$
      for every $i\in I$. 
\end{enumerate}   

\begin{remark}
  In the definition of a differentiable space, the underlying topological space need not necessarily
  be Hausdorff. We do not need that generality here, so for reasons of convenience  we assume for the rest
  of the paper that $X$ is Hausdorff.  
  As in the affine case, a differentiable space will be called \emph{reduced} if its structure sheaf consists of
  continuous functions on the underlying topological space.
  For more details on differentiable spaces see the monograph \cite{NavSanDS}.
\end{remark}

Next we turn differentiable spaces into a category. To this end we define a morphism between differentiable spaces
$(X,\calO_X)$ and  $(Y,\calO_Y)$ as a morphism $(\psi,\psi^\sharp) : (X,\calO_X) \to (Y,\calO_Y) $
in the category of locally $\R$-ringed spaces. In case both  $(X,\calC^\infty_X)$ and  $(Y,\calC^\infty_Y)$  are reduced,
$\psi^\sharp$ is given over $V\subset Y$ open by the pullback
$\psi^\sharp_V = (\psi|_V)^*$.  
We say in this situation that $\psi$ is \emph{smooth}. 
In other words this means that pullback by $\psi$ maps smooth functions to smooth functions. 

A particularly interesting class of non-reduced affine differentiable spaces is  given by the Whitney fields
over a closed subset of some Euclidean space. The interpretation of Whitney fields as forming the structure sheaf
of a non-reduced affine differentiable space appears to be new, but quite helpful for our purposes. 
Originally, Whitney fields played an important role in the extension and composition theory
of smooth functions \cite{MalIDF,TouIFD,BieMilPawCDF}. They can be defined
for any affine differentiable space $X$ with a fixed singular chart.
For simplicity, we assume that $X$ is embedded as a closed subspace in some Euclidean space $\R^n$ which in
other words means that the singular chart is the identical embedding.
By a  Whitney field on a relatively open subset $U\subset X$  we then understand  a family
$F= (F_\alpha)_{\alpha\in \N^n}$ of continuous functions $F_\alpha :U \to \R$ for which there exists
a smooth $f:\widetilde{U}\to \R$ defined on some open set $\widetilde{U} \subset \R^n$ such that
$U = \widetilde{U}\cap X$ and such that  for all $\alpha \in \N^n$ the restriction of the
partial derivative $\partial^\alpha f = \frac{\partial^{|\alpha|} f}{\partial^\alpha x}$ coincides with $F_\alpha$ that is if $F_\alpha = \partial^\alpha f |_U$.
We denote the space of Whitney fields over $U$ by $\calE^\infty (U)$ and the canonical map
$\calC^\infty (U) \to \calE^\infty (U)$, $f \mapsto (\partial^\alpha f)_{\alpha\in \N^n}$ by $\sfJ^\infty$.
The latter is sometimes called the \emph{jet map}. 
By Whitney's extension theorem there is an intrinsic definition of the concept of a Whitney field, but we do not
need that here and therefore refer the interested reader to \cite{MalIDF,TouIFD}.
The space of Whitney fields $\calE^\infty (U)$ carries a unique structure of an algebra over $\R$
such that the jet map $\sfJ^\infty : \calC^\infty (\widetilde{U}) \to \calE^\infty (U)$ becomes a morphism of algebras. 
Its kernel then is an ideal in $\calC^\infty (\widetilde{U})$ which we denote by $\calJ^\infty_X (\widetilde{U})$ or
$\calJ^\infty (X; \widetilde{U})$ and call it the ideal of smooth functions on $\widetilde{U}$ which are \emph{flat} on $X$.
Note that $\calJ^\infty_X (\widetilde{U})$
is contained in the vanishing ideal $\calJ_X(\widetilde{U})$. When $U$ runs through the open sets of $X$,
the assignment $U\mapsto \calE^\infty (U)$ becomes a fine sheaf on $X$, the sheaf of \emph{Whitney fields} on
$X$ with respect to the embedding $X\hookrightarrow \R^n$. The pair $\big( X, \calE^\infty\big)$ then becomes
an affine differentiable space which in general is not reduced. 

There is a relative version of Whitney fields which allows to get rid of the existence of a global singular chart.
To this end let $(X,\calO_X)$ be a differentiable space and $(Y,\calC^\infty_Y)$ a closed differentiable subspace
which means that $Y\subset X$ is closed and that $ \calC^\infty_Y$ is the restriction of the sheaf $\calC^\infty_X$
to $X$. Let  $\calC^\infty (X;Y)$ be the ideal in the algebra of smooth functions on $X$ for which there exists 
an atlas $(\varphi_i)_{i\in I}$ of singular charts $\varphi_i : U_i \to A_i \subset \R^{n_i}$ of $X$ such that a smooth
function $f :X \to \R$ lies in $\calC^\infty (X;Y)$ if and only if for each $i$ there is a smooth function
$f_i : \widetilde{U}_i \to \R$ defined on an open neighborhood $\widetilde{U}_i$ of $A_i$ in $\R^{n_i}$ such
that $f_i$ is flat on $\varphi_i (Y\cap U_i)$ and such that $f|_{U_i} = f_i \circ \varphi_i$. One can check that
if this condition is fulfilled in one atlas of singular charts for $X$, then it is so too in any other, hence the
definition of $\calC^\infty (X;Y)$ does not depend on the choice of an atlas. 
Following \cite{BieMilPawCDF}, we call $\calC^\infty (X;Y)$ the algebra of smooth functions on $X$ \emph{flat}
on $Y$. It is straightforward to check that associating to each open subset $U\subset X$ the algebra
$\calC^\infty (U;Y):= \calC^\infty (U;Y\cap U)$ gives rise to a fine sheaf on $X$ which actually even is an
ideal sheaf in $\calC^\infty_X$. 
Let us describe the section space $\calC^\infty (U;Y)$ when $U$ is the domain of some singular chart
$\varphi : U \to \R^n$ and $U\cap Y \neq \emptyset$. Choose $\widetilde{U}\subset \R^n$ open such that
$U = \widetilde{U} \cap A$ where $A= \varphi (X)$. Put $B =\varphi (Y)$.
The intersection $\calJ^\infty_B (\widetilde{U}) \cap \calJ_A (\widetilde{U})$ then is an ideal in
$\calJ^\infty_B (\widetilde{U})$ giving rise to a natural isomorphism of algebras 
\[
  \calC^\infty (U;Y)\cong\calJ^\infty_B (\widetilde{U})\, \big/ \, \calJ^\infty_B(\widetilde{U})\cap\calJ_A (\widetilde{U}) \ .
\]
The corresponding algebra of \emph{Whitney fields} on $Y$ \emph{relative} $X$ can now be defined and is given
as the quotient algebra
\[ \calE^\infty (Y;X) := \calC^\infty (X) / \calC^\infty (X;Y) \ .\]

Finally in this section we state a topological result whose proof is immediate from the fact that  
the algebra of smooth functions on an open set of some Euclidean space is a nuclear Fr\'echet space,
see e.g.~\cite{TreTVSDK}.
 
\begin{proposition}
\label{Prop:FrechetStructuresOnFunctionSpaces}
For $X \subset \R^n$ locally closed, and $Y \subset X $ a relatively closed subset the 
spaces $\calC^\infty (X)$, $\calE^\infty (X)$, $\calC^\infty (X,Y)$, and $\calE^\infty (Y,X)$ all carry in a 
natural way the structure of a Fr\'echet space which they inherit 
as subquotients from the Fr\'echet space $\calC^\infty (U)$, where $U\subset \R^n$ is open such that
$X \subset U$ is closed. 
\end{proposition}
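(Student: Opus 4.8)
The plan is to identify each of the four spaces $\calC^\infty(X)$, $\calE^\infty(X)$, $\calC^\infty(X,Y)$, $\calE^\infty(Y,X)$ with a subquotient of a fixed nuclear Fréchet space and to invoke the standard fact that closed subspaces and Hausdorff quotients of Fréchet spaces are again Fréchet (and remain nuclear, which is not strictly needed for the statement but comes for free and is the source of the ``immediate'' remark preceding the proposition). Fix an open $U \subset \R^n$ with $X \subset U$ closed in $U$; the restriction $\calC^\infty(U)$ is a nuclear Fréchet space with its usual topology of uniform convergence of all derivatives on compacta, as recorded in \cite{TreTVSDK}. I will treat the four spaces in turn, in each case exhibiting the relevant closed subspace or ideal so that the quotient is Hausdorff.

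First, $\calC^\infty(X)$. By definition of the reduced structure sheaf, $\calC^\infty(X) \cong \calC^\infty(U)/\calJ_X(U)$, where $\calJ_X(U)$ is the ideal of smooth functions on $U$ vanishing on $X$. The key point is that $\calJ_X(U)$ is \emph{closed} in $\calC^\infty(U)$: it is the intersection over $x \in X$ of the kernels of the continuous evaluation functionals $f \mapsto f(x)$. Hence the quotient is a Fréchet space with the quotient topology, and one checks this agrees with the natural topology on $\calC^\infty(X)$ (the coarsest making all pullbacks of the restrictions continuous). Second, $\calE^\infty(X)$. Here $\calE^\infty(X) \cong \calC^\infty(U)/\calJ^\infty_X(U)$ via the jet map $\sfJ^\infty$, and $\calJ^\infty_X(U)$ — the ideal of functions flat on $X$ — is again closed, being $\bigcap_{x \in X,\, \alpha \in \N^n} \ker\bigl(f \mapsto \partial^\alpha f(x)\bigr)$, an intersection of kernels of continuous seminorm-type functionals. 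So $\calE^\infty(X)$ is Fréchet as a quotient; equivalently, one can see it as the closed subspace of $\bigl(\calC^0(X)\bigr)^{\N^n}$ consisting of families arising as jets, but the quotient description is cleaner and is what the proposition asserts.

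Third, $\calC^\infty(X,Y)$. This is an \emph{ideal} in $\calC^\infty(X)$, so it is natural to realize it as a subquotient: take its preimage $\widetilde{\calC^\infty(X,Y)}$ in $\calC^\infty(U)$ under the surjection $\calC^\infty(U) \twoheadrightarrow \calC^\infty(X)$. Using the local description given in the excerpt — via a singular chart $\varphi$ with $A = \varphi(X)$, $B = \varphi(Y)$, and the isomorphism $\calC^\infty(U;Y) \cong \calJ^\infty_B(\widetilde U)/\bigl(\calJ^\infty_B(\widetilde U) \cap \calJ_A(\widetilde U)\bigr)$ — one sees that this preimage is $\calJ^\infty_Y(U) + \calJ_X(U)$, or more directly $\calJ^\infty_Y(U)$ modulo $\calJ_X(U)$; in any case it is a sum of two closed subspaces. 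The mild subtlety is that a sum of two closed subspaces of a Fréchet space need not be closed in general, so I would instead argue directly that $\widetilde{\calC^\infty(X,Y)} = \{ f \in \calC^\infty(U) : \partial^\alpha f|_Y = 0 \text{ for all } \alpha\}$ intersected appropriately, i.e.\ it too is an intersection of kernels of continuous functionals $f \mapsto \partial^\alpha f(y)$ over $y \in Y$ — hence closed — and then $\calC^\infty(X,Y)$ is its image in the Fréchet space $\calC^\infty(X)$, equivalently the quotient $\widetilde{\calC^\infty(X,Y)}/\calJ_X(U)$, which is Fréchet since $\calJ_X(U)$ is closed. Finally, $\calE^\infty(Y,X) := \calC^\infty(X)/\calC^\infty(X,Y)$ is then the quotient of the Fréchet space $\calC^\infty(X)$ by the closed subspace $\calC^\infty(X,Y)$ just identified, hence Fréchet; and naturality of all these identifications gives the compatibility with the subquotient structure coming from $\calC^\infty(U)$.

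The main obstacle — really the only non-formal point — is verifying \emph{closedness} of the various ideals in $\calC^\infty(U)$, since everything else is an application of the permanence of the Fréchet (and nuclear) property under closed subspaces and Hausdorff quotients. For $\calJ_X$ and $\calJ^\infty_X$ this is routine: each is an intersection of kernels of continuous linear functionals (point evaluations of the function, respectively of all its partial derivatives). The slightly more delicate case is $\calC^\infty(X,Y)$, where one must make sure that passing through the chart-dependent description in the excerpt does not spoil continuity; but the chart isomorphisms are isomorphisms of Fréchet spaces (pullback by a homeomorphism underlying a singular chart, together with the already-established identifications), so closedness is preserved, and independence of the atlas — established earlier in the section — guarantees the resulting topology is intrinsic. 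I would also remark that nuclearity is inherited automatically, since subspaces and quotients of nuclear Fréchet spaces are nuclear; this is why the proof is ``immediate'' once the subquotient picture is in place.
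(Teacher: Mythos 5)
Your overall strategy --- realize each space as a subquotient of the nuclear Fr\'echet space $\calC^\infty(U)$, check that the relevant ideals are closed, and invoke the permanence of the Fr\'echet property under closed subspaces and Hausdorff quotients --- is exactly what the paper has in mind; the paper in fact offers no written proof beyond the remark that the claim is ``immediate'' from the Fr\'echet--nuclear structure of $\calC^\infty(U)$, so your write-up is a faithful expansion of the intended argument. The treatment of $\calC^\infty(X)$ and $\calE^\infty(X)$ is complete and correct: $\calJ_X(U)$ and $\calJ^\infty_X(U)$ are intersections of kernels of continuous functionals, hence closed, and the quotients are Fr\'echet.

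There is, however, one genuine gap in the last step, and it is precisely the point you flagged and then did not resolve. Realizing $\calC^\infty(X,Y)$ as the quotient $\calJ^\infty_Y(U)\big/\bigl(\calJ^\infty_Y(U)\cap\calJ_X(U)\bigr)$ does give it a Fr\'echet topology, since both spaces are closed in $\calC^\infty(U)$. But when you then declare $\calE^\infty(Y,X)=\calC^\infty(X)/\calC^\infty(X,Y)$ to be ``the quotient of $\calC^\infty(X)$ by the closed subspace $\calC^\infty(X,Y)$ just identified,'' closedness of $\calC^\infty(X,Y)$ \emph{inside} $\calC^\infty(X)$ has not actually been established: a continuous injection of one Fr\'echet space into another need not have closed image, and exhibiting $\calC^\infty(X,Y)$ as a quotient of $\calJ^\infty_Y(U)$ says nothing about its image in $\calC^\infty(X)/\calJ_X(U)$. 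Equivalently, $\calE^\infty(Y,X)\cong\calC^\infty(U)\big/\bigl(\calJ^\infty_Y(U)+\calJ_X(U)\bigr)$, and Hausdorffness of this quotient requires the sum $\calJ^\infty_Y(U)+\calJ_X(U)$ to be closed --- which is exactly the ``sum of two closed subspaces'' subtlety you raised. This needs a separate argument (for instance, identifying $\calE^\infty(Y,X)$ with a closed space of jets on $Y$ via a relative Whitney extension theorem, or invoking a regular-situation/\L{}ojasiewicz-type hypothesis as the paper does elsewhere through its homological regularity condition); it is not a formal consequence of the subquotient picture. Since the paper itself glosses over this point, your proposal is no weaker than the source, but as written the claim for $\calE^\infty(Y,X)$ is not proved.
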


\section{The diagonal complex}
\label{sec:diagonal-complex}
Let $(X,\calC^\infty)$ be a reduced differentiable space as defined in 
Section \ref{sec-preliminaries}, and $A = \calC^\infty (X)$ be the
algebra of smooth functions on $X$. Since $A$ carries the structure of a
nuclear Fr\'echet algebra, all tensor product topologies on the tensor powers $A^{\otimes k+1}$
coincide, hence the completed tensor product  
\[
  C_k (A) := A^{\hat{\otimes} {k+1} } = A \hat{\otimes} A\hat{\otimes}  \ldots \hat{\otimes} A, \quad k \in \N
\] 
is uniquely determined. Moreover, $C_k(A)$ coincides with
$\calC^\infty (X^k)$ since for open $U\subset \R^n$ and $V\subset \R^m$ there
is a natural identification
$\calC^\infty (U\times V) \cong \calC^\infty (U) \hat{\otimes} \calC^\infty (V) $, see \cite{TreTVSDK}.
The Hochschild boundary 
\begin{displaymath}
  b:\:C_k (A) \rightarrow C_{k-1} (A), \: a_0 \otimes \ldots \otimes a_k \mapsto
  \sum_{0\leq i \leq k}  (-1)^i \, b_i \big(  a_0 \otimes \ldots \otimes a_k \big) , 
\end{displaymath}  
where   
\begin{equation}
\label{eq:Hochschildbdry}
  b_i \big(  a_0 \otimes \ldots \otimes a_k \big) =
  \begin{cases}
    a_0 \otimes \ldots \otimes a_i  a_{i+1} \otimes \ldots \otimes a_k & \text{ for $0\leq i<k$},\\
    a_k a_0 \otimes \ldots \otimes a_{k-1}  & \text{ for $i=k$},
  \end{cases}
\end{equation}
then is continuous for each $k\in \N^*$, and gives rise to the  
(continuous) Hochschild complex $\big( C_\bullet (A) , b \big) $; 
see \cite{TayHCTA} for details on the Hochschild homology of Fr\'echet algebras.
The homology of the complex $\big( C_\bullet (A) , b \big) $ is the (continuous) Hochschild homology of $A$ and 
is to be denoted by $HH_\bullet (A)$. 
In the following, we will construct a chain complex $\big( E_\bullet (A) , \beta \big) $,
called the diagonal complex, 
which is quasi-isomorphic to $\big( C_\bullet (A) , b \big) $ and which can be interpreted 
as the complex of global sections of a fine sheaf complex 
$\big( \scrE_\bullet , \beta \big) $ over the space $X$. The diagonal complex originally
goes back to the work of Teleman \cite{TelMHH} on an alternative method to determine 
the  Hochschild homology of the algebra of smooth functions on a smooth manifold. 
It has been used by Brasselet--Pflaum \cite{BraPflHAWFSS} to compute
the Hochschild homology of algebras of Whitney fields over subanalytic sets. 

To define the diagonal complex, consider for every $k\in \N$ the closed ideal 
\[
  J_k (X) := \calC^\infty (X^{k+1}, \Delta^{k+1}_X ) \subset \calC^\infty (X^{k+1})
\]
of smooth functions on $X^{k + 1}$ flat on the diagonal 
\[
  \Delta_X^{k+1} := \big\{ (x_0,\ldots , x_k) \in X^{k+1} \mid x_0 = \ldots =x_k \big\} \ .
\]  
In other words, 
$J_k (X) $ is the set of all $f\in \calC^\infty (X^{k+1})$ such that for every singular chart 
$\varphi: U \rightarrow \sfX \subset \R^n$ over an open  $U \subset X$ the function
\[
  \varphi_*f : \: \sfX^{k+1} \rightarrow \R , \: (v_0,\ldots , v_k) \mapsto 
  f \big( \varphi^{-1} (v_0) , \ldots ,  \varphi^{-1} (v_k) \big) 
\]
coincides with the restriction $g|_{\sfX^{k+1}}$ of a function $g\in \calJ^\infty (\Delta_\sfX^{k+1}; \widetilde{U}^{k+1})$,
where $\widetilde{U} \subset \R^n$ is an open neighborhood of $\sfX$. 
Define $\scrE_k (X)$  as the algebra of Whitney functions on the diagonal $\Delta^{k+1}_X$ relative to the space
$X^{k+1}$ that is put
\begin{equation}
  \label{Eq:DefDiagCplx}
  \scrE_k (X) := \calE^\infty (\Delta^{k+1}_X  ; X^{k+1}) = 
  \calC^\infty (X^{k+1}) / J_k (X) \: . 
\end{equation}
\begin{lemma}
\label{Lem:HochschildBdryMapsIdealToIdeal}
  The Hochschild boundary $b$ maps $J_k (X) $ into $J_{k-1} (X)$. 
\end{lemma}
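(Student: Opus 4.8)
The plan is to show that each face map $b_i$ carries $J_k(X)$ into $J_{k-1}(X)$; since $b=\sum_i(-1)^ib_i$ and $J_{k-1}(X)$ is a vector subspace, this suffices. The statement is local on $X$, so after choosing an atlas of singular charts we may test the flatness condition one chart at a time; in fact, since the pushforward $\varphi_*$ intertwines everything in sight, it is enough to treat the model situation in which $X$ is a closed subset of $\R^n$, the chart is the identity embedding, and flatness on a diagonal $\Delta^{k+1}_X$ is witnessed by a smooth function $g$ on an open neighborhood $\widetilde U\subset(\R^n)^{k+1}$ of $\Delta^{k+1}_X$ with $g\in\calJ^\infty(\Delta^{k+1}_{\sfX};\widetilde U)$, i.e.\ all partial derivatives of $g$ vanish along the small diagonal. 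So fix $f\in J_k(X)$ with such a witness $g$.

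First I would handle the "multiplication" faces $b_i$ for $0\le i<k$. Here $b_i(a_0\otimes\cdots\otimes a_k)=a_0\otimes\cdots\otimes a_ia_{i+1}\otimes\cdots\otimes a_k$ corresponds, on the level of functions, to pulling back along the diagonal embedding $\delta_i:X^k\hookrightarrow X^{k+1}$ that repeats the $i$-th coordinate, i.e.\ $(x_0,\dots,x_{k-1})\mapsto(x_0,\dots,x_i,x_i,x_{i+1},\dots,x_{k-1})$. Correspondingly, a smooth extension of $b_i f$ to a neighborhood in $(\R^n)^k$ is obtained by composing the witness $g$ with the linear map $(\R^n)^k\to(\R^n)^{k+1}$ that repeats the $i$-th slot. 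By the chain rule, every partial derivative of $g\circ(\text{this linear map})$ is a finite $\R$-linear combination of partials of $g$ evaluated at the image point. Now observe that this linear map sends the small diagonal of $(\R^n)^k$ into the small diagonal of $(\R^n)^{k+1}$ (all coordinates equal forces the repeated pair equal too), so along $\Delta^{k}_{\sfX}$ those partials of $g$ are all evaluated on $\Delta^{k+1}_{\sfX}$, where they vanish by hypothesis. Hence $b_i f$ is flat on $\Delta^k_X$, i.e.\ lies in $J_{k-1}(X)$. The last face $b_k$, which sends $a_0\otimes\cdots\otimes a_k\mapsto a_ka_0\otimes a_1\otimes\cdots\otimes a_{k-1}$, is treated identically: it corresponds to pullback along the linear map $(\R^n)^k\to(\R^n)^{k+1}$, $(x_0,\dots,x_{k-1})\mapsto(x_0,x_1,\dots,x_{k-1},x_0)$, which again maps the small diagonal into the small diagonal, so the same chain-rule argument applies.

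Two bookkeeping points remain. One must check that the extension just produced is smooth on a genuine \emph{open} neighborhood of $\Delta^{k}_{\sfX}$ in $(\R^n)^k$: the preimage of $\widetilde U$ under a linear map is open and contains the small diagonal because $\widetilde U$ does, so this is automatic. Second, one must confirm that the chart-independence built into the definition of $J_\bullet(X)$ is respected — but this is exactly the content, quoted earlier, that the flatness condition checked in one atlas holds in any atlas, together with the fact that $\varphi_*$ commutes with the face maps. The only mildly delicate point is the chain-rule estimate — one should note it is purely formal (a linear change of variables, so no Faà di Bruno combinatorics beyond the observation that a partial $\partial^\gamma$ of $g\circ L$ is a fixed linear combination of partials $\partial^{\gamma'}g$ with $|\gamma'|=|\gamma|$ composed with $L$), and the vanishing is inherited verbatim. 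I expect no real obstacle; the main thing to get right is the identification of each Hochschild face with pullback along the correct diagonal-type linear map and the verification that each such map pushes the small diagonal into the small diagonal.
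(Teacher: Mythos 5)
Your proof is correct and takes essentially the same route as the paper's: identifying each face $b_i$ with pullback along the coordinate-doubling linear map and noting that partial derivatives of $b_i(a)$ along $\Delta^{k}_{\sfX}$ are binomial linear combinations of partial derivatives of $a$ evaluated on $\Delta^{k+1}_{\sfX}$ is precisely the content of the paper's higher-Leibniz commutation formula $\partial^\alpha_j\big(b_i(a)\big)=b_i(\widetilde a)$. The only cosmetic difference is that you package the computation as the chain rule for a linear map, while the paper packages it as the Leibniz rule for the product $a_i a_{i+1}$; the vanishing argument on the diagonal is identical.
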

\begin{proof}
  Without loss of generality it suffices to prove that for $X$ a closed subset of some open set 
  $U$ in $\R^n$ and $ a \in \calJ^\infty (\Delta^{k+1}_X,U^{k+1})$ the image $b_i (a) $ is in 
  $\calJ^\infty (\Delta^k_X,U^k)$. To this end denote by $\partial^\alpha_j$ the differential operator 
  \[
    a_0 \otimes \ldots \otimes a_l \mapsto  
    a_0 \otimes \ldots  \otimes  (\partial^\alpha a_j) \otimes \ldots \otimes a_l , 
  \]
  where $a_0, \ldots, a_l \in \calC^\infty (U)$ and $j\leq l$.
  The higher Leibniz formula entails for  $\alpha \in \N^n$ and $i,j \in \N$ with
  $0 \leq i \leq k$ and $0\leq j \leq k-1$ that
  \begin{equation}
  \label{Eq:CommHochDiff}
    \partial^\alpha_j \big( b_i(a) \big) = b_i \big( \widetilde{a} \big),
  \end{equation}
  where 
  \begin{equation}
  \label{Eq:CommHochDiff2nd}
    \widetilde{a} = 
    \begin{cases}
      \partial^\alpha_j a  & \text{for $j < i$, $(j,i)\neq (0,k)$}\ ,\\
      \partial^\alpha_{j+1} a  & \text{for $j > i$}\ ,\\
      \sum\limits_{\beta +\gamma =\alpha} {\alpha \choose\beta} \:
      \partial^\beta_k \, \partial^\gamma_0 a  & \text{for $(j,i) =(0,k)$}\ , \\
      \sum\limits_{\beta +\gamma =\alpha} {\alpha \choose\beta} \:
      \partial^\beta_i\, \partial^\gamma_{i+1} a  & \text{for $j= i$}\ .
    \end{cases} 
  \end{equation}
  Eqs.~\eqref{Eq:CommHochDiff} and \eqref{Eq:CommHochDiff2nd} imply that for 
  a diagonal element $\Delta^k (v) := (v,\ldots ,v) \in U^k$ with $v\in X$ the equality 
  \[
    \big( \partial^\alpha_j b_i(a)\big) \, \big( \Delta^{k} (v) \big) = 
    b_i \widetilde{a} \big( \Delta^{k} (v) \big)  =  \widetilde{a} \big( \Delta^{k+1} (v) \big) = 0
  \]
  holds true, since $\widetilde{a} \in \calJ^\infty (\Delta^{k+1}_X,U^{k+1})$. 
  This entails the claim.
\end{proof}
  As a consequence of the lemma we obtain a short exact sequence of chain complexes:
  \begin{equation}
    0 \longrightarrow \big( J_\bullet (X) , b\big) \longrightarrow \big( C_\bullet (A) , b  \big)
    \longrightarrow \big( E_\bullet (A) , \beta \big)  \longrightarrow 0 \: .
  \end{equation}
  We call the quotient complex $\big( E_\bullet (A), \beta \big)$ the 
  \emph{diagonal complex} of $A$, and denote the canonical projection 
  by $p_X :C_\bullet (A) \rightarrow E_\bullet (A) $.
  
  Letting $U$ run through the open subsets of $X$ we put for each $k\in \N$
  \begin{equation}
   \label{Eq:DefShCplx}
   \scrE_k (U) := \calC^\infty (U^{k+1}) / J_k (U).
  \end{equation}
  By construction, the  $\scrE_k (U)$ form the section spaces of a presheaf on $X$ denoted by $\scrE_k$.
  Since the differential $\beta$ commutes with the restriction morphisms, 
  we even obtain a presheaf of chain complexes $\big( \scrE_\bullet , \beta \big)$.
  Its global section space coincides with the diagonal complex of $A$:
  \[
      E_\bullet (A) =  \scrE_\bullet (X) \ . 
  \]

  Even more holds true, but before that we need to introduce the following notion
  based on the concepts of regularity by Malgrange \cite{MalIDF} and 
  Tougeron \cite[Defs.~1.30 \& 4.4]{TouIFD}, 
  cf.~also \cite[Defs.~1.6.6 \& 1.6.18]{PflAGSSS} 
  \begin{definition}
    A differentiable space $(X,\calC^\infty)$ is called \emph{homologically regular} if for each point $x \in X$ there exists a singular chart 
   $\varphi : U \rightarrow \R^n$ defined on an open neighborhood of $x$ such that 
   $\varphi ( U ) \subset \R^n$ is regular in the sense of Tougeron
   and if for every $k\in \N^*$ the sets $\varphi ( U )^k$ and $\Delta^k_{\R^n}$ 
   are regularly situated in $\R^{kn}$.   
  \end{definition}

\begin{theorem}\label{thm:projection-diagonal-complex-qism}
  Let $(X,\calC^\infty)$  be a homologically regular reduced differentiable space and 
  let $A = \calC^\infty (X)$ be its global section algebra. 
  Then $\big( \scrE_\bullet , \beta \big)$ is a 
  complex of fine sheaves over $X$. If in addition $(X,\calC^\infty)$ is 
  homologically regular, then the chain map 
  $p_X :C_\bullet (A) \rightarrow \scrE_\bullet (X) = E_\bullet (A)$ onto the diagonal complex 
  is a quasi-isomorphism. 
\end{theorem}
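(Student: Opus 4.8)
The strategy is the standard sheaf-theoretic localization argument, but the technical heart lies in establishing the two regularity hypotheses do their job: (i) that $\scrE_\bullet$ is a complex of \emph{fine} sheaves (not merely a presheaf of complexes), and (ii) that the stalk inclusions are quasi-isomorphisms. First I would address fineness. Since $\calC^\infty_X$ admits smooth partitions of unity (this is where the differentiable-space structure, locally a closed subset of $\R^n$ with restricted smooth functions, is used), so does $\calC^\infty_{X^{k+1}}$ by pulling back a partition of unity subordinate to a product cover; and because $J_k(X)$ is an ideal, multiplication by a bump function descends to the quotient $\scrE_k$. Thus each $\scrE_k$ is a module over the fine sheaf of rings $\calC^\infty_X$ (acting through the diagonal embedding $X\hookrightarrow X^{k+1}$), hence fine, and one checks $\beta$ is $\calC^\infty_X$-linear so $(\scrE_\bullet,\beta)$ is a complex of fine sheaves. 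Note the theorem statement as written seems to assert fineness unconditionally in the first sentence and then re-invokes homological regularity; I would make precise that sheafiness of the presheaf $\scrE_k$ (i.e. gluing and the vanishing of higher \v{C}ech cohomology via fineness) is what requires no extra hypothesis beyond partitions of unity, while it is the quasi-isomorphism statement that genuinely needs homological regularity.

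Next, for the quasi-isomorphism $p_X\colon C_\bullet(A)\to\scrE_\bullet(X)$, the key observation is that $C_\bullet$ itself is (after the identification $C_k(A)=\calC^\infty(X^{k+1})$) the global-sections complex of a presheaf of complexes $U\mapsto C_\bullet(\calC^\infty(U))$, and $p_X$ is induced by a morphism of presheaves $C_\bullet\to\scrE_\bullet$ with kernel presheaf $J_\bullet$. By Lemma~\ref{Lem:HochschildBdryMapsIdealToIdeal} we have a short exact sequence of presheaves of complexes $0\to J_\bullet\to C_\bullet\to\scrE_\bullet\to 0$. The plan is to show that the sheafification $\scrJ_\bullet$ of $J_\bullet$ is an acyclic complex of fine sheaves, so that its global sections $J_\bullet(X)$ are acyclic, whence the long exact homology sequence forces $p_X$ to be a quasi-isomorphism. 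Since fineness of $\scrJ_k$ follows exactly as above (it is a $\calC^\infty_X$-ideal subsheaf of $\calC^\infty_{X^{k+1}}$), it suffices to check that $\scrJ_\bullet$ is \emph{exact at the level of stalks}.

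The stalk computation is the main obstacle, and it is precisely what the definition of homological regularity is tailored for. Working in a singular chart $\varphi\colon U\to\sfX\subset\R^n$ with $\sfX$ regular in Tougeron's sense and $\sfX^k$ regularly situated relative to $\Delta^k_{\R^n}$, one identifies the stalk of $\scrJ_k$ at a point $x$ with (the germ at the diagonal point of) the ideal $\calJ^\infty(\Delta^{k+1}_{\sfX};\widetilde U^{k+1})$ modulo flatness, and must show the resulting complex of flat-function ideals is acyclic. The argument here follows the Brasselet--Pflaum template \cite{BraPflHAWFSS}: one uses the regular-situation hypothesis to construct, locally near the diagonal, a contracting homotopy for the Hochschild differential restricted to functions flat on the diagonal — essentially a relative version of the classical fact that the Hochschild complex of $\calC^\infty(\R^n)$ with the Koszul resolution, localized away from the diagonal, is contractible, combined with Whitney-flatness estimates ensuring the homotopy operator preserves flatness and continuity (using Proposition~\ref{Prop:FrechetStructuresOnFunctionSpaces} for the topological bookkeeping). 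The regularity of $\sfX$ is what guarantees the requisite extension/division operations stay within the category of smooth (resp. Whitney) functions with controlled flatness. Once stalkwise exactness of $\scrJ_\bullet$ is in hand, fineness gives $H_\bullet(J_\bullet(X))=H_\bullet(\Gamma(X,\scrJ_\bullet))=\Gamma(X,\scrH_\bullet(\scrJ_\bullet))=0$, and the five-lemma applied to the homology long exact sequence of $0\to J_\bullet\to C_\bullet\to E_\bullet\to 0$ completes the proof.
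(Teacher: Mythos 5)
Your treatment of fineness is essentially the paper's: $\scrE_k$ is a module over the fine sheaf $\calC^\infty_{|\Delta_X^{k+1}}$, hence a fine sheaf, and $\beta$ is compatible with this structure. Your observation about the redundant second occurrence of ``homologically regular'' in the statement is also apt. The problem lies in your main step.

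The gap is in the sentence ``the sheafification $\scrJ_\bullet$ of $J_\bullet$ is an acyclic complex of fine sheaves, so that its global sections $J_\bullet(X)$ are acyclic.'' The presheaves $U\mapsto C_k(\calC^\infty(U))=\calC^\infty(U^{k+1})$ and $U\mapsto J_k(U)$ on $X$ are \emph{not} sheaves on $X$: for a cover $(U_i)$ of $X$ the products $U_i^{k+1}$ do not cover $X^{k+1}$, so neither gluing nor separatedness holds away from the diagonal. Consequently $\Gamma(X,\scrJ_\bullet)\neq J_\bullet(X)$ in general (the sheafification only remembers germs along the diagonal), and stalkwise acyclicity of $\scrJ_\bullet$ plus fineness gives you acyclicity of $\Gamma(X,\scrJ_\bullet)$, not of $J_\bullet(X)$ --- but it is the latter that you need to feed into the long exact sequence of $0\to J_\bullet(X)\to C_\bullet(A)\to E_\bullet(A)\to 0$. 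Closing this gap requires an additional comparison result (in the paper this role is played, in the more general Proposition~\ref{prop:isomorphism-hochschild-chain-complex-diagonal-complex}, by the quasi-isomorphism $C_\bullet(A)\to\hscrC_\bullet(\scrA)(X)$ imported from \cite{PflPosTanHochschild}); it does not come for free from fineness. The paper's own proof of the theorem avoids stalks and hypercohomology entirely: it constructs a \emph{global} contraction of $J_\bullet(X)$ by means of cutoff functions $\Psi_{k,t}$ supported in $t$-neighborhoods of the diagonal, an explicit homotopy $H_{k,t}$ (built from degeneracy maps) between $\operatorname{id}$ and $\Psi_{\bullet,t}$, the integrated homotopy $K_{k,t}=\int_t^1 H_{k,s/2}(\partial_s\Psi_{k,s}\,\cdot)\,ds$, and finally the limit $t\searrow 0$, whose existence and continuity on $J_\bullet(X)$ is exactly where the regularly-situated hypothesis enters via the Łojasiewicz-type estimates of \cite[Lem.~4.4]{BraPflHAWFSS}. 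Your description of the local mechanism (``a relative version of the classical fact\dots combined with Whitney-flatness estimates'') gestures at the right ingredients but does not identify this limit construction, which is the actual technical content; and even granting it locally, you would still need the presheaf-versus-sheaf comparison above, or a partition-of-unity gluing of the local contracting homotopies themselves (as in Case~2 of the proof of Proposition~\ref{prop:isomorphism-hochschild-chain-complex-diagonal-complex}), to conclude.
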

\begin{proof}
  We proceed in several steps, and follow and extend the argument given in the proof of 
  \cite[Prop.~4.2]{BraPflHAWFSS}. 

  \textit{Step 1.}~First we show that $\scrE_k$ is a complex of fine sheaves. Let 
  $\calC^\infty_{| \Delta_X^{k+1}}$ be the restriction of the structure 
  sheaf of $X^{k+1}$ to the diagonal $\Delta_X^{k+1}$. Since 
  the structure sheaf $\calC^\infty_{X^{k+1}}$ is fine, $\calC^\infty_{| \Delta_X^{k+1}}$ is a 
  a fine sheaf, too. By construction, $\scrE_k$  is a module over the sheaf 
  $\calC^\infty_{| \Delta_X^{k+1}}$, hence $\scrE_k$ is even a sheaf and has to be fine as well. 

  \textit{Step 2.}~Next we will define for each $t > 0$ a localization chain map 
  $\Psi_{\bullet,t} : C_\bullet (A) \to C_\bullet(A) $ such that for every $c \in C_k (A)$ the chain 
  $\Psi_{\bullet,t} c $ has support in a $t$-neighborhood of the diagonal 
  $\Delta^{k+1} (X)$. To this end choose a proper embedding $\varphi$
  of $(X,\calC^\infty)$ into the colimit of ringed spaces 
  $(\R^{\infty},\calC_{\R^{\infty}}^\infty)  := 
  \lim\limits _{\underset{n\in \N}{\longrightarrow}} ( \R^n ,\calC^\infty_{\R^n})$. Such an embedding 
  exists by \cite[1.3.17 Lemma]{PflAGSSS} and \cite[Prop.\ A.3]{PflPosTanHochschild}. Denote by
  $d:X \times X\to\R$ the restriction of the Euclidean metric $d_\infty$ on $\R^{\infty}$ to $X$ 
  that is put $d(x,y)= d_\infty (\varphi (x),\varphi (y))$ for all $x,y \in X$.  
  By \cite[Cor.\ A.4]{PflPosTanHochschild}, the function $d^2$ lies in $\calC^\infty (X\times X)$. Now fix 
  a smooth function $ \varrho :\R\to [0,1]$ having support in $(-\infty ,\frac 12]$ 
  and which satisfies $\varrho (r) = 1$ for $r\leq \frac 1 3$. For $t > 0$ 
  denote by $\varrho_t$ the rescaled function $r \mapsto \varrho(\frac{r}{t})$.
  Next define functions $\Psi_{k,i,t} \in \calC^\infty (X^{k+1})$ for $k\in \N$ 
  and $i= 0, \ldots, k+1$ by
  \begin{equation}\label{eq:cutoff-functions-diagonal}
    \Psi_{k,i,t} (x_0, \ldots, x_k) =  \prod_{j=0}^{i-1} 
    \varrho_t \big( d^2(  x_j  , x_{j+1}) \big) \quad \text{where } x_0,\ldots , x_k \in X, \:  x_{k+1}:= x_0 \ .
  \end{equation} 
  Then put $\Psi_{k,t} := \Psi_{k,k+1,t}$. Since
  $\calC^\infty (X^{k+1})$ acts on $C_k(A)$ in a natural way, one obtains for each 
  $t >0$ a graded map of degree $0$
  \[
    \Psi_{\bullet,t} : C_\bullet (A) \to C_\bullet(A), \: C_k(A) \ni c \mapsto 
    \Psi_{k,t} c \: .
  \]
  One immediately checks the commutation relations
  \[
    b_j \Psi_{k,i,t} =    
    \begin{cases}
       \Psi_{k,i-1,t} b_j ,& \text{if $j < i$},\\
       \Psi_{k,i,t} b_j , & \text{if $j \geq i$},\\
    \end{cases}
  \] 
  which implies that $\Psi_{\bullet,t}$ commutes with the face maps $b_i$. 
  Hence, $\Psi_{\bullet,t}$ is a chain map. By construction, the support of 
  $\Psi_{k,t} c$ is contained in the neighborhood $U_{k+1, \frac t 2}$ of the 
  diagonal $\Delta^{k+1} (X)$, where for $\varepsilon >0$ 
  \[
    U_{k,\varepsilon} := \big\{ (x_1, \ldots ,x_k) \in X^k \mid 
    \sqrt{d^2(x_1,x_2 ) + d^2(x_2,x_3 ) + \ldots + d^2(x_k,x_1 )} < \varepsilon  \big\}
  \]
  is the so-called \emph{$\varepsilon$-neighborhood} of the diagonal 
  $\Delta^k (X) \subset X^k$.

  For later purposes let us mention at this point that the functions $\Psi_{k,i,t}$ and 
  $\Psi_{k,t}$ are restrictions of smooth functions $\Psi^\infty_{k,i,t}$ and $\Psi^\infty_{k,t}$  
  on $\big( \R^\infty \big) ^{k+1}$ which are defined analogously, just with $d_\infty$ instead 
  of $d$. 
  In the following, we will denote the restrictions of the functions
  $\Psi^\infty_{k,i,t}$ and $\Psi^\infty_{k,t}$ and to the cartesian product 
  $Y^{k+1}$ of a locally closed subset  $Y \subset \R^\infty$ 
  of $\R^\infty$,
  by the symbols $\Psi^Y_{k,i,t}$ and $\Psi^Y_{k,t}$, respectively. 

  \textit{Step 3.}~The chain map $\Psi_{\bullet,t}: C_\bullet (A) \to C_\bullet(A)$ is 
  homotopic to the identity morphism on $C_\bullet (A)$ for every $t >0$.
 
  To prove this recall first that the Hochschild chain complex of $A$ has the following 
  degeneracy maps for $k \in N$ and $i = 0, \ldots , k$:
  \begin{equation}
    \label{eq:degenarcy-maps}
    s_{k,i} : C_k (A) \rightarrow C_{k+1} (A), \; a_0 \otimes \ldots \otimes a_k \mapsto 
    a_0 \otimes \ldots \otimes a_i \otimes 1 \otimes a_{i+1} \otimes \ldots \otimes a_k \: .    
  \end{equation}
  Therefore, we can define $\calC^\infty (X)$-module maps 
  $\eta_{k,i,t} : C_k (A) \rightarrow C_{k+1} (A)$ 
  for every integer $k\geq -1$, $i=1,\cdots,k+2$ and $t >0$ by 
  \begin{equation}\label{eq:homotopy-components}
    \eta_{k,i,t} (c) :=
    \begin{cases}
      \Psi_{k+1,i,t}\cdot ( s_{k,i-1} c  )& \text{for $i\leq k+1$},\\
      0 &\text{for $i=k+2$}.
    \end{cases}
  \end{equation}
Here we have assumed $c\in C_k (A)$ and have put $C_{-1} (A) := \{ 0\}$.
For $i = 2,\cdots, k$ one then computes using the above commutation relations 
for $b_j$ and $\Psi_{k,i,t}$
\begin{equation}
  \begin{split}
   (  b \eta_{k,i,t}  +  \eta_{k-1,i,t} b) c  
   = \,  &  (-1)^{i-1} \Psi_{k,i-1,t}  c \, + \,  \Psi_{k,i-1,t} \sum_{j=0}^{i-2} \,
       (-1)^j \,  s_{k-1,i-2} b_{k,j} c \, + \\
       & + (-1)^i \Psi_{k,i,t} c \, + \Psi_{k,i,t} \sum_{j=0}^{i-1} \,
       (-1)^j \, s_{k-1,i-1} b_{k,j} c  ,
  \end{split}
\end{equation}
and for the two remaining cases $i=1$ and $i=k+1$
\begin{align} 
  ( b \eta_{k,1,t} + \eta_{k-1,1,t} b) c
      = & \, c  \, - \, \Psi_{k,1,t} c \, + \, \Psi_{k,1,t} s_{k-1,0} b_{k,0} c ,
  \\[2mm]
  ( b \eta_{k,k+1,t}  +  \eta_{k-1,k+1,t} b ) c  = & \\
    = \Psi_{k,k,t} (-1)^k c \, + \, \Psi_{k,k,t} & \sum_{j=0}^{k-1}
      \, (-1)^j \, s_{k-1,k-1} b_{k,j} c  \, + \,
      (-1)^{k+1} \Psi_{k,t} \, c  . \nonumber
\end{align}
These formulas immediately entail that the collection $\big(H_{k,t}\big)_{k\in \N}$ of maps
  \begin{displaymath}
    \begin{split}
      H_{k,t} & = \sum_{i=1}^{k+1} \, (-1)^{i+1} \, \eta_{k,i,t} : 
      C_k (A)\rightarrow C_{k+1} (A)
    \end{split}
  \end{displaymath}
  forms a homotopy between the identity  and the localization
  morphism $\Psi_{\bullet,t}$. More precisely,
  \begin{align}
  \label{EqAlgHom}
      \big( b H_{k,t} + H_{k-1,t} b \big) c & = c - \Psi_{k,t} \, c
      \quad \text{for all }c\in C_k (A)\ . 
  \end{align}     
  This finishes the third step.

  Note that by the considerations at the end of the previous step, the functions 
  $\eta_{k,i,t}$ can be also defined on the spaces $C_k(\calC^\infty (Y))$,
  where $Y \subset \R^\infty$ is again a locally closed subset.
  We write $\eta^Y_{k,i,t}$ for these maps. Observe that for each 
  locally closed $Z \subset Y$ the diagrams
  \begin{equation}
  \label{Dia1}
  \vcenter{\xymatrix{
    C_k(\calC^\infty (Y)) \ar[r]^{\Psi^Y_{k,i,t}} \ar[d] & C_{k+1} (\calC^\infty (Y)) \ar[d] \\
    C_k(\calC^\infty (Z)) \ar[r]_{\Psi^Z_{k,i,t}} & C_k(\calC^\infty (Z)) 
    }}
  \end{equation}
  and
  \begin{equation}
  \label{Dia2}
  \vcenter{\xymatrix{
    C_k(\calC^\infty (Y)) \ar[r]^{\eta^Y_{k,i,t}} \ar[d] & C_{k+1} (\calC^\infty (Y)) \ar[d] \\
    C_k(\calC^\infty (Z)) \ar[r]_{\eta^Z_{k,i,t}} & C_k(\calC^\infty (Z))
  }}
  \end{equation}
  then commute, where the vertical arrows denote the natural restriction maps. 
   
  \textit{Step 4.}~Next we study the operators 
  $K_{k,t}: C_k (A)\rightarrow C_{k+1} (A)$ defined for $t \in (0,1]$ by  
  \begin{equation}
    \label{eq:homotopies-between-t-values}
    K_{k,t} c := \int_t^1 H_{k, \frac s2 } \big( \partial_s \Psi_{k,s} c\big)ds\: .
  \end{equation}
  Since $\Psi_s$ is a chain map, the algebraic homotopy relation stated in  
  Eq.~\eqref{EqAlgHom} entails (as in Eq.~(4.9) of \cite{BraPflHAWFSS}) that for $c \in C_k(A)$
  \begin{equation}
  \label{EqTDepContHom}
  \begin{split}  
    \big( b K_{k,t} + K_{k-1,t} b \big) c \, &  =  
    \int_t^1 b H_{k, \frac s2} \big( \partial_s \Psi_{k,s} c\big) + 
    H_{k-1, \frac s2 } b \big( \partial_s \Psi_{k,s} c \big) ds  = \\
    & = \int_t^1  \partial_s \Psi_{k,s} c -  \Psi_{k,\frac s2} (\partial_s \Psi_{k,s} c) ds =  
    \Psi_{k,1} c - \Psi_{k,t} c \: ,
  \end{split}
  \end{equation}
  where we have used that $\operatorname{supp} \Psi_{k,\frac s2} \subset U_{k+1,\frac s4}$ and 
  $(\partial_s \Psi_{k,s})_{|U_{k+1,\frac s4}}=0$. 

  \textit{Step 5.} In the last step we show that $J_\bullet (X)$ is a contractible complex
  which will entail the claim that $p_X :C_\bullet (A) \rightarrow \scrE_\bullet (X)$ 
  is a quasi-isomorphism.  To this end it suffices to show by Eq.~\eqref{EqTDepContHom}
  that for $c \in J_k (X)$ the relation
  \begin{equation}\label{eq:localization-zero-limit}
    \lim_{t \searrow 0} \Psi_{k,t} c = 0 
  \end{equation}
  holds true, and that the operator  
  \begin{equation}\label{eq:homotopy-zero-limit}
    K_k : J_k (X) \to J_{k+1} (X) , \: c \mapsto \lim_{t \searrow 0} K_{k,t} c
  \end{equation}
  exists and is continuous. The limits in Equations \eqref{eq:localization-zero-limit}
  and \eqref{eq:homotopy-zero-limit} are taken with respect to the natural Fr\'echet topology
  which the space $J_k (X)$ inherits as a closed subspace of $C_k (A) = \calC^\infty (X^{k+1})$; see
  Prop.~\ref{Prop:FrechetStructuresOnFunctionSpaces}. Describing convergence of a sequence
  $(f_l)_{l\in \N} \subset J_k (X)$ is somewhat tricky, but it essentially boils down 
  to the criterion that for every compact $K \subset X$ there is an ambient Euclidean space $\R^n \supset K$
  (provided by an appropriate singular chart) and a sequence of representatives $F_l \in \calC^\infty (\R^{(k+1)n})$ of 
  $f_l$ over $\Delta^{k+1}_K$ 
  such that $(F_l)_{l\in \N}$ and all sequences of higher derivatives
  $(\partial^\alpha F_l)_{l\in \N}$ converge uniformly on $\Delta^{k+1}_K$.   
  
  Now denote by $B_n$ the closed ball in $\R^n$ of radius $n$, and put 
  $C_n := B_n \cap X$.  Then $(C_n)_{n\in \N}$ is a compact exhaustion of $X$.
  Consider for locally closed $Y \subset \R^\infty$ the maps
  \begin{displaymath}
  \begin{aligned}
    \mu^Y_k  : & \, J_k(Y) \times [0,1] \to  J_k (Y), \quad
                  (c,t) \mapsto 
                  \begin{cases}
                    \Psi^Y_{k,t}, & \text{if $t>0$}, \\
                    0, & \text{if $t=0$},
                  \end{cases} \\
    \mu^Y_{k,i}   : & \, J_k(Y) \times [0,1] \to  J_k (Y), \quad
                  (c,t) \mapsto 
                  \begin{cases}
                    \Psi^Y_{k,i,t} s_{k-1,i} (\partial_t \Psi^Y_{k-1,t})c, & \text{if $t>0$}, \\
                    0, & \text{if $t=0$},
                  \end{cases}
  \end{aligned}   
  \end{displaymath}
  The maps $\mu^{B_n}_k  :  J_k(B_n) \times [0,1] \to  J_k (B_k)$
  and $\mu^{B_n}_{k,i}   :  J_k(B_n) \times [0,1] \to  J_k (B_n) $ are continuous 
  by \cite[Lem.~4.4]{BraPflHAWFSS}. Note that in the proof of that lemma the assumption
  is needed that $X$ is homologically regular, meaning that $X^{k+1}$ and $\Delta^{k+1}_X$ are regularly situated. 
  Since $J_k(B_n) \to J_k(C_n) $ is a topological identification by the open mapping theorem 
  and Proposition \ref{Prop:FrechetStructuresOnFunctionSpaces}, it follows by commutativity of the diagrams 
  \eqref{Dia1} and \eqref{Dia2}  that $\mu^{C_n}_k$ and $\mu^{C_n}_{k,i}$ are continuous as well. 
  But this means that for all $n\in \N$ and $c \in J_k (X)$ 
  \begin{displaymath}
    \lim_{t \searrow 0} \big( \Psi_{k,t} c  \big)_{|C_n}=   \lim_{t \searrow 0} \mu^{C_n}_k (c_{|C_n},t) = 0 
  \end{displaymath}
  and
  \begin{displaymath} 
  \begin{split}
    \lim_{t \searrow 0} \big( K_{k,t} c \big)_{|C_n} \, & = 
    \lim_{t \searrow 0} \, \sum_{i=1}^{k+1} (-1)^{i+1} \, \int_t^1 \mu^{C_n}_{k,i} (c_{|C_n},s) ds = \\
    & = \sum_{i=1}^{k+1} (-1)^{i+1} \, \int_0^1 \mu^{C_n}_{k,i} (c_{|C_n},s) ds \: ,
  \end{split}
  \end{displaymath}
  where the last integral  exists and is continuous in $c \in J_k (X)$ by continuity 
  of $\mu^{C_n}_{k,i}$. The proof is finished.  
\end{proof}

Before we can apply this result to reduce the computation of the Hochschild homology of the global section algebra to
local ones we need to introduce hypercohomology for complexes of sheaves not bounded below. We follow
\cite{WeiGelEDHCH,WeiCHS} and refer to these papers for further details.

Let $\big( \scrD^\bullet , \delta \big)$ be a cochain complex of sheaves on a paracompact topological space $X$. 
Since the category of sheaves on $X$ has enough injectives, there exists  an injective Cartan--Eilenberg
resolution $\scrD^\bullet \longrightarrow  \scrI^{\bullet,\bullet}$ as described in \cite[Sec.~5.7]{WeiIHA}. 
Up to chain homotopy equivalence, the product total complex $\Tot^{\Pi} (\scrI^{\bullet,\bullet})$ does not
depend on the choice of the injective resolution of $\scrD^\bullet$. Applying the global section functor results
in the total complex $\Gamma \left(X, \Tot^{\Pi} \scrI^{\bullet,\bullet}\right) \cong
\Tot^{\Pi} \Gamma \left(X, \scrI^{\bullet,\bullet}\right) $ which again up to chain homotopy equivalence is 
independant of the particular choice of the resolution $\scrD^\bullet \longrightarrow  \scrI^{\bullet,\bullet}$. 
The \emph{hypercohomology} of  $\big( \scrD^\bullet , \delta \big)$ is then defined by
\[
  \mathbb{H}^k \big( X, \scrD^\bullet \big) =
  H^k \left( \Gamma \left( X, \Tot^{\Pi} \scrI^{\bullet,\bullet}\right) \right) \ . 
\] 
Since a fine sheaf on $X$ is acyclic, one can take fine resolutions instead of injective ones which results in
the observation that for a cochain complex  $\big( \scrD^\bullet , \delta \big)$ of sheaves which are fine already,
the hypercohomology is given by the cohomology of the global sections. More precisely, the following
folklore result holds true. 
  
\begin{theorem}
  Let $\big( \scrD^\bullet , \delta \big) $ be a cochain complex of fine sheaves on a paracompact topological
  space $X$. Let
  $\scrH^\bullet (\scrD^\bullet)$ be its cohomology sheaf that is let
  \[
    \scrH^k (\scrD^\bullet) =
    \ker\left( \delta :\scrD^k\to\scrD^{k+1}\right) \big/ \im\left( \delta: \scrD^{k-1}\to\scrD^k\right) \ .
  \]
  Then the global section functor commutes with cohomology and the hypercohomology of $\big( \scrD^\bullet , \delta \big) $
  is given by
  \begin{equation}
  \label{eq:hypercohomology-fine-sheaf-complex}
  \mathbb{H}^k \big( X, \scrD^\bullet \big) = H_k \big( \Gamma (X, \scrD^\bullet) \big) =
  \Gamma \left( X, \scrH^k (\scrD^\bullet) \right) \quad \text{for all } k\in \N \ .
  \end{equation}   
\end{theorem}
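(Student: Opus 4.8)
The plan is to establish the two claimed identities — that the global section functor commutes with taking cohomology sheaves, and that hypercohomology reduces to the cohomology of the complex of global sections — by exploiting acyclicity of fine sheaves on a paracompact base.

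First I would address the identity $\mathbb{H}^k(X,\scrD^\bullet) = H^k(\Gamma(X,\scrD^\bullet))$. Pick an injective Cartan--Eilenberg resolution $\scrD^\bullet \to \scrI^{\bullet,\bullet}$ as in the setup preceding the statement. For each fixed $p$, the column $\scrD^p \to \scrI^{p,\bullet}$ is an injective resolution, so the hyperderived functor applied to the double complex computes $\mathbb{H}^k$ via $H^k(\Tot^\Pi\Gamma(X,\scrI^{\bullet,\bullet}))$. Now filter the double complex $\Gamma(X,\scrI^{\bullet,\bullet})$ by columns and compare it against the single complex $\Gamma(X,\scrD^\bullet)$ mapped in via the augmentation. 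Because each $\scrD^p$ is fine and $X$ is paracompact, $\scrD^p$ is $\Gamma(X,-)$-acyclic, so the augmented row $0 \to \Gamma(X,\scrD^p) \to \Gamma(X,\scrI^{p,0}) \to \Gamma(X,\scrI^{p,1}) \to \cdots$ is exact. A standard spectral sequence / acyclic assembly argument (for instance \cite[Acyclic Assembly Lemma]{WeiIHA}) then shows that the inclusion $\Gamma(X,\scrD^\bullet) \hookrightarrow \Tot^\Pi\Gamma(X,\scrI^{\bullet,\bullet})$ is a quasi-isomorphism; one has to be slightly careful here because the complex is not bounded below, which is precisely why the \emph{product} total complex is used and why we invoke the version of the assembly lemma valid for upward-exact complexes. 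Taking $H^k$ of both sides yields the first equality.

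Next I would handle $H^k(\Gamma(X,\scrD^\bullet)) = \Gamma(X,\scrH^k(\scrD^\bullet))$. The point is that $\Gamma(X,-)$ is left exact in general, so $\ker(\delta\colon \scrD^k \to \scrD^{k+1})$ has global sections $\ker(\delta\colon \Gamma(X,\scrD^k)\to\Gamma(X,\scrD^{k+1}))$ automatically. The subtlety is the image: one must show $\Gamma(X,\im(\delta\colon\scrD^{k-1}\to\scrD^k))$ equals $\im(\delta\colon\Gamma(X,\scrD^{k-1})\to\Gamma(X,\scrD^k))$, and that forming the quotient sheaf commutes with taking global sections. Both follow once we know that the relevant kernel, image, and cokernel sheaves are themselves acyclic. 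Concretely, let $\scrZ^k = \ker(\delta\colon\scrD^k\to\scrD^{k+1})$ and $\scrB^k = \im(\delta\colon\scrD^{k-1}\to\scrD^k)$. Since $\scrD^k$ is fine, $\scrZ^k$ and $\scrB^k$ are modules over the same structure sheaf (via the soft/fine partition of unity) and hence fine, so from the short exact sequences $0\to\scrZ^k\to\scrD^k\to\scrB^{k+1}\to 0$ and $0\to\scrB^k\to\scrZ^k\to\scrH^k(\scrD^\bullet)\to 0$ one deduces, using that all the sheaves involved are acyclic on the paracompact space $X$, that the global section sequences stay exact. Chasing these gives both $\Gamma(X,\scrB^k) = \im(\delta\colon\Gamma(X,\scrD^{k-1})\to\Gamma(X,\scrD^k))$ and $\Gamma(X,\scrH^k(\scrD^\bullet)) = \Gamma(X,\scrZ^k)/\Gamma(X,\scrB^k) = H^k(\Gamma(X,\scrD^\bullet))$.

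The main obstacle I anticipate is the unboundedness of $\scrD^\bullet$: the naive bicomplex spectral sequence or assembly lemma needs a boundedness or exhaustiveness hypothesis to guarantee convergence, and one has to check that the product total complex $\Tot^\Pi$ (rather than the direct sum total complex) is the correct choice and that the relevant filtration is complete and exhaustive in the appropriate sense. Everything else — fineness implying acyclicity on a paracompact base, left exactness of $\Gamma$, the module structure making $\scrZ^k$, $\scrB^k$, $\scrH^k$ fine — is routine. Since the statement is explicitly called a folklore result and the references \cite{WeiGelEDHCH,WeiCHS,WeiIHA} treat exactly this situation, a clean write-up can largely cite those for the convergence bookkeeping and concentrate on assembling the pieces.
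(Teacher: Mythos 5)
Your treatment of the first equality $\mathbb{H}^k(X,\scrD^\bullet)=H^k(\Gamma(X,\scrD^\bullet))$ is the standard Cartan--Eilenberg / acyclic-assembly argument and matches what the paper sets up just before the statement; the paper takes this part for granted (``one can take fine resolutions instead of injective ones'') and its written proof only addresses the second equality. There you genuinely diverge from the paper: the paper argues that $(\scrD^\bullet,\delta)$ is quasi-isomorphic to $(\scrH^\bullet(\scrD^\bullet),0)$ and that both are complexes of fine sheaves, whereas you chase the long exact sequences attached to $0\to\ker\delta\to\scrD^k\to\im\delta\to 0$ and $0\to\im\delta\to\ker\delta\to\scrH^k(\scrD^\bullet)\to 0$. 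Your route is the more standard and more careful one, but it exposes a real gap.

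The gap is the assertion that $\ker\bigl(\delta\colon\scrD^k\to\scrD^{k+1}\bigr)$ and $\im\bigl(\delta\colon\scrD^{k-1}\to\scrD^k\bigr)$ are ``modules over the same structure sheaf and hence fine.'' A subsheaf of a fine sheaf need not be fine: fineness of $\scrD^k$ provides endomorphisms of $\scrD^k$ (partitions of unity), but these need not commute with $\delta$ and so need not preserve $\ker\delta$. What you actually need, and should state as a hypothesis, is that all the $\scrD^k$ are modules over one fixed soft sheaf of rings (here $\calC^\infty_X$) and that $\delta$ is linear over it; then kernels, images and cohomology sheaves are modules over that soft sheaf, hence acyclic, and your diagram chase goes through. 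Without such a hypothesis the second equality is simply false: for the de Rham complex $\Omega^\bullet_M$ of a paracompact manifold $M$ --- a cochain complex of fine sheaves --- the Poincar\'e lemma gives $\Gamma\bigl(M,\scrH^k(\Omega^\bullet_M)\bigr)=0$ for $k\geq 1$, while $H^k\bigl(\Gamma(M,\Omega^\bullet_M)\bigr)=H^k_{\mathrm{dR}}(M)$ is generally nonzero; here $d$ is not $\calC^\infty_M$-linear and $\ker d$ is not acyclic. The paper's own proof rests on the same hidden assumption (it needs $\scrH^k(\scrD^\bullet)$ to be fine, and in addition needs $\scrD^\bullet$ to be quasi-isomorphic to its cohomology sheaf complex, which also fails in general), and in the intended applications the differential is the Hochschild boundary $b$, which is $\calC^\infty_X$-linear, so the missing hypothesis is satisfied there. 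Make that linearity explicit instead of deducing fineness of the kernel and image sheaves from fineness of the $\scrD^k$ alone.
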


\begin{proof}
  It only remains to show the second equality in Eq.~\eqref{eq:hypercohomology-fine-sheaf-complex}.
  Since $\big( \scrD^\bullet , \delta \big) $  and $\big( \scrH^\bullet (\scrD^\bullet),0\big)$ 
  are quasi-isomorphic sheaf complexes, their hypercohomologies coincide. Both cochain complexes consist
  of fine sheaves, hence their hypercohomology is given by the cohomology of global sections and the claim follows.
\end{proof}

The virtue of Weibel's approach is that it allows to define hypercohomology for  cochain complexes not bounded below.
After reindexing $\scrC^k := \scrC_{-k}$ the components of a chain complex of sheaves $\big( \scrC_\bullet, d \big)$
one therefore obtains a cochain complex of sheaves $\big( \scrC^\bullet, d^* \big)$ for which the hypercohomology groups
$\mathbb{H}^k (X,\scrC^\bullet) $ are defined. We put $\mathbb{H}_k (X,\scrC_\bullet) = \mathbb{H}^{-k} (X,\scrC^\bullet)$
and call these the \emph{hyperhomology} groups of $\big( \scrC_\bullet, d \big)$.
Let us apply this idea to the Hochschild homology theory of a broad class of sheaves of algebras which we call
\emph{sheaves of bornological algebras over a differentiable space} $(X,\calC^\infty_X)$. By definition,
a sheaf $\scrA$ defined on $X$ lies in that category if it satisfies the following three conditions.
\begin{enumerate}[(\textup{BA}1)]
\setcounter{enumi}{0}  
\item\label{ite:algsheaf}
  $\scrA$ is a sheaf of algebras over  $\calC^\infty_X$ that is for each open $U\subset X$ the section space $\scrA (U)$ is
  an algebra over the commutative ring $\calC^\infty_X (U)$ and the corresponding restriction morphism
  $\scrA (U) \to \scrA (V)$  for $V \subset U$ open is an algebra morphism over the restriction homomorphism
  $\calC^\infty_X (U)\to \calC^\infty_X (V)$. 
\item The section space $\scrA (U)$ for $U\subset X$ open is a nuclear LF-space and a bornological algebra with respect to
  the bornology of von Neumann bounded sets. In particular this means that multiplication is separately continuous. 
\item For each open $U\subset X$ the action  $\calC^\infty_X (U) \times \scrA (U)\times \scrA (U)$
  is continuous where $\calC^\infty_X (U)$ carries its natural structure of a Fr\'echet algebra. 
\end{enumerate}
Given a sheaf of bornological algebras $\scrA$ over $(X,\calC^\infty_X)$ we call it  
\begin{enumerate}[(\textup{BA}1)]
\setcounter{enumi}{3}  
\item\label{ite:unitalalgsheaf}
  \emph{unital} if the section spaces $\scrA (U)$  together with their restriction morphisms
  $\scrA (U) \to \scrA (V)$ are unital, and
\item\label{ite:hunital}
  \emph{H-unital} if each of the section spaces $\scrA (U)$ is an
  H-unital algebra that is if the Bar complex corresponding to $\scrA (U)$ is acyclic.
\end{enumerate}  

Note that every unital sheaf of bornological algebras is H-unital. Below we will give in
Section \ref{subsec:convolution-algebras} an example of an H-unital but not unital sheaf of bornological algebras.

Assume that $\scrA$ is a sheaf of bornological algebras over $X$ and let $A =\scrA (X)$ be its algebra of global sections. Then use the completed bornological tensor product $\hatotimes$ as defined in \cite[Sec.~2.2]{MeyACH} 
to define for $U\subset X$ open the the space of Hochschild $k$-chains of the 
bornological algebra $\scrA (U)$:
\[
  \scrC_k (\scrA) (U) := \scrA (U)^{\hatotimes k+1} := \underbrace{\scrA (U) \hatotimes \ldots \hatotimes \scrA (U)}_{k+1\text{\,-times}} \ .
\]
Associating to every open $U\subset X $ the space $\scrC_k (\scrA) (U)$
gives rise to a presheaf on $X$.
The Hochschild boundary operators $b: \scrC_k (\scrA) (U) \to \scrC_{k-1} (\scrA) (U)$ 
combine to a morphism of presheaves $b: \scrC_k (\scrA) \to \scrC_{k-1} (\scrA)$.
After passing to the sheafifications $\hscrC_k (\scrA)$ we therefore obtain a chain complex of sheaves
$\big(\hscrC_\bullet (\scrA),b \big)$ which we call the \emph{Hochschild chain complex} of $\scrA$.
For the case where $\scrA$ coincides with the structure sheaf $\calO_X$ of a differentiable space $X$ one can interpret
$\big(\hscrC_\bullet (\calO_X),b \big)$  as a ``differentiable version'' of Weibel's sheaf complex of Hochschild chains
of a scheme \cite{WeiCHS}. The \emph{Hochschild homology}
$HH_\bullet (\scrA) $ of the sheaf $\scrA$ on $X$ is now defined as the hyperhomology
$\mathbb{H}_\bullet (X, \hscrC_\bullet (\scrA))$. The question arises how $HH_\bullet (\scrA) $ relates to the
Hochschild homology $HH_\bullet (A)$ of the global section algebra $A := \scrA (X)$
and to the global section space of the homology sheaf $\scrH\!\scrH_\bullet (\scrA):= \scrH_\bullet  \big(\hscrC_\bullet (\scrA)\big)$.
The answer is given  by the next result.

\begin{theorem}\label{thm:isomorphism-homology-localization-hyperhomology}
  Let $\scrA$ be a sheaf of H-unital bornological algebras over the homologically regular differentiable space $(X,\calO)$.
  Then the Hochschild chain complex  $\big(\hscrC_\bullet (\scrA),b \big)$ is a complex of fine sheaves
  on $X$. The global section space of its homology sheaf coincides with the Hochschild homology 
  of the global section algebra $A = \Gamma (X,\scrA)$. More precisely, for all $k\in \N$
  \begin{equation}
    \label{eq:Hochschild-homology-commutes-global-sections}
    HH_k (A) = HH_k (\scrA) =HH_k \left( \Gamma (X,\hscrC_\bullet (\scrA))\right) =
    \Gamma \left( X,\scrH\!\scrH_k (\scrA)\right)  \ .
  \end{equation}
\end{theorem}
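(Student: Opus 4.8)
The plan is to reduce the statement for a general H-unital sheaf of bornological algebras $\scrA$ to the already-established case of the structure sheaf treated in Theorem~\ref{thm:projection-diagonal-complex-qism}, by building a ``diagonal subcomplex'' of $\hscrC_\bullet(\scrA)$ that is $\calC^\infty_X$-linear enough to carry the Teleman-style cutoff homotopies. First I would record that $\hscrC_k(\scrA)$ is fine: by hypothesis $\scrA(U)$ is a module over the Fr\'echet ring $\calC^\infty_X(U)$, hence so is each $\scrA(U)^{\hatotimes k+1}$ via, say, the action on the zeroth tensor slot, and this $\calC^\infty_X$-module structure is compatible with restrictions; therefore the presheaf $\scrC_k(\scrA)$ and its sheafification $\hscrC_k(\scrA)$ are sheaves of $\calC^\infty_X$-modules. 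Since $\calC^\infty_X$ is a fine sheaf (it admits smooth partitions of unity on the paracompact space $X$) and a sheaf of modules over a fine sheaf of rings is fine, each $\hscrC_k(\scrA)$ is fine. This gives the first assertion, and by the preceding folklore theorem it also gives the last two equalities in \eqref{eq:Hochschild-homology-commutes-global-sections}: $HH_k(\scrA) = \mathbb{H}_k(X,\hscrC_\bullet(\scrA)) = HH_k(\Gamma(X,\hscrC_\bullet(\scrA))) = \Gamma(X,\scrH\!\scrH_k(\scrA))$, using that the sheaf $\scrH\!\scrH_k(\scrA)$ is by definition the degree-$k$ cohomology sheaf of a complex of fine sheaves.

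The remaining, and main, point is the equality $HH_k(A) = HH_k(\Gamma(X,\hscrC_\bullet(\scrA)))$, i.e.\ that the natural chain map from the Hochschild complex $C_\bullet(A)$ of the global section algebra $A = \scrA(X)$ into the global sections of the sheafified Hochschild complex is a quasi-isomorphism. The map exists because $C_k(A) = \scrA(X)^{\hatotimes k+1} = \scrC_k(\scrA)(X) \to \hscrC_k(\scrA)(X)$ is the sheafification map on global sections, and it commutes with $b$. To prove it is a quasi-isomorphism I would mimic Steps 2--5 of the proof of Theorem~\ref{thm:projection-diagonal-complex-qism}: using the proper embedding of $(X,\calC^\infty_X)$ into $\R^\infty$ and the smooth square-distance function $d^2 \in \calC^\infty_X(X\times X)$, form the same cutoff functions $\Psi_{k,i,t} \in \calC^\infty_X(X^{k+1})$ from \eqref{eq:cutoff-functions-diagonal}. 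Because each $\scrC_k(\scrA)(U)$ is a module over $\calC^\infty_X(U^{k+1})$ (the action being, e.g., multiplication of each tensor factor $\scrA(U)$ by the corresponding pullback of $\calC^\infty_X(U)$ — here one uses (BA1) and (BA3)), the operators $\Psi_{\bullet,t}$ act on $C_\bullet(A)$, are chain maps by the same commutation relations $b_j\Psi_{k,i,t} = \Psi_{k,i-1,t}b_j$ or $\Psi_{k,i,t}b_j$, and have image supported in a $t$-neighborhood of the diagonal. The homotopy $H_{k,t}$ from $\id$ to $\Psi_{\bullet,t}$ built from the degeneracies $s_{k,i}$ and the cutoffs works verbatim — crucially the degeneracies insert the \emph{unit} of $\scrA(U)$, which is where H-unitality, not mere unitality, is the right hypothesis: when $\scrA$ is genuinely non-unital one replaces the degeneracy-based contraction of the Bar complex by the acyclicity built into (BA5), i.e.\ one works with the reduced/normalized complex and uses that the Bar complex $B_\bullet(\scrA(U))$ is acyclic to produce the local contracting homotopies, sheafifying them. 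This reduces the claim to showing that the subcomplex $\scrJ_\bullet(\scrA) \subset \scrC_\bullet(\scrA)$ of chains that sheafify to zero — equivalently, chains whose germ vanishes near the diagonal — is contractible, which follows from the $t\searrow 0$ limit argument of Steps 4--5: $\lim_{t\searrow 0}\Psi_{k,t}c = 0$ for such $c$ and the homotopies $K_{k,t} = \int_t^1 H_{k,s/2}(\partial_s\Psi_{k,s}c)\,ds$ converge as $t\searrow 0$, by the continuity lemma \cite[Lem.~4.4]{BraPflHAWFSS} applied compact-set by compact-set along an exhaustion $C_n = B_n\cap X$, using homological regularity.

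The hard part will be the functional-analytic bookkeeping in this last step: unlike the scalar case $\scrA = \calC^\infty_X$, the section spaces $\scrA(U)$ are only nuclear LF-spaces (not Fr\'echet), the bornological tensor products need not have a single natural topology, and ``$c$ sheafifies to zero near the diagonal'' must be given a workable description — presumably, as in the scalar case, via representatives over $\Delta^{k+1}_{C_n}$ in charts whose derivatives converge uniformly, now with values in $\scrA$-tensor-powers. I would handle this by fixing a compact exhaustion, choosing singular charts around each compact, transporting the cutoff operators and homotopies to the chart pictures, invoking the open mapping theorem for LF-spaces (or Meyer's bornological framework \cite{MeyACH}) to identify the relevant subquotient topologies, and then quoting the continuity of $\mu^{C_n}_k$ and $\mu^{C_n}_{k,i}$ from \cite[Lem.~4.4]{BraPflHAWFSS} — the regular-situation hypothesis entering exactly there. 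Once the local contractibility of $\scrJ_\bullet(\scrA)$ and the acyclicity afforded by H-unitality are in hand, the short exact sequence $0 \to \scrJ_\bullet(\scrA)(X) \to C_\bullet(A) \to (\text{diagonal complex}) \to 0$ together with the identification of the diagonal complex with $\Gamma(X,\hscrC_\bullet(\scrA))$ up to quasi-isomorphism completes the chain of equalities in \eqref{eq:Hochschild-homology-commutes-global-sections}.
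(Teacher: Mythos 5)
Your overall architecture is right --- fineness of $\hscrC_k (\scrA)$ via the $\calC^\infty_X$-module structure, the folklore theorem to convert hyperhomology into homology of global sections, and a Teleman-type cutoff argument for the remaining comparison --- and this is indeed how the paper proceeds. But two steps of your plan have genuine gaps. First, you reduce the quasi-isomorphism $C_\bullet (A) \to \Gamma (X, \hscrC_\bullet (\scrA))$ to contractibility of the subcomplex of chains that sheafify to zero. That controls only the kernel: the map $\scrC_k (\scrA)(X) \to \hscrC_k (\scrA)(X)$ from presheaf to sheaf global sections is in general not surjective, so acyclicity of its kernel does not make it a quasi-isomorphism, and your closing ``identification of the diagonal complex with $\Gamma(X,\hscrC_\bullet(\scrA))$ up to quasi-isomorphism'' is exactly the unproved cokernel half. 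The paper takes this comparison as an input from \cite[Prop.~2.7]{PflPosTanHochschild} and then, separately, proves that the projection of $\hscrC_\bullet (\scrA)(X)$ onto the diagonal complex is a quasi-isomorphism; there the relevant subcomplex $\scrJ_k$ is \emph{not} the kernel of sheafification but the subsheaf $\varphi^\sharp\big(\calJ^\infty_{\Delta^{k+1}_{\R^n}}\big)\cdot \hscrC_k(\scrA)$ of chains divisible by functions flat on the diagonal --- a much larger object, and the one on which the limit $\lim_{t\searrow 0}\Psi_{k,t}c=0$ and the convergence of $K_{k,t}$ actually hold (first for finite sums $\sum f_i c_i$, then by density and continuity).

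Second, your treatment of H-unitality does not work as stated. The homotopies $\eta_{k,i,t}=\Psi_{k+1,i,t}\cdot s_{k,i-1}$ need an honest unit to define the degeneracies $s_{k,i}$; H-unitality (acyclicity of the Bar complex) does not hand you degeneracy maps, nor any contracting homotopies that are $\calC^\infty(X^{k+1})$-linear and hence compatible with the cutoffs. The paper's route is to adjoin a unit, $\scrA^+=\scrA\oplus\calO$, run the unital argument for $\scrA^+$, and then use the split exact sequence $0\to\scrA\to\scrA^+\to\calO\to 0$ together with excision for H-unital algebras (\cite[Lemma~2.4]{PflPosTanHochschild}) to descend to $\scrA$; you should adopt that reduction rather than appeal to (BA5) directly. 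A smaller point: in the non-affine case the local contracting homotopies of $\scrJ_\bullet$ over chart domains must be glued by a partition of unity together with suitably small cutoff parameters $\varepsilon_i$ (the paper's Case 2); an exhaustion by compacta alone does not produce a global homotopy.
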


Before we can prove the theorem, we need to extend the diagonal complex to the situation where 
a sheaf of bornological algebras is given. To define the diagonal complex associated to such a sheaf $\scrA$
choose a singular chart $(\varphi,\varphi^\sharp): (U,\calO|_U) \to (\R^n, \calC^\infty_{\R^n})$ of the
differentiable space $(X,\calO)$ and put for each $k\in \N$
\begin{equation}
  \label{eq:flat-sections-diagonal-sheaf-bornological-algebra}
  \scrJ_k (U):= \scrJ_k^{\varphi} (U) :=
  \varphi^\sharp_{\R^{kn}} \big( \calJ^\infty_{\Delta^{k+1}_{\R^n}} (\R^{kn}) \big) \cdot \hscrC_k (\scrA) (U)
  \subset  \hscrC_k (\scrA) (U)  \ .
\end{equation}
One immediately checks that assigning to an open $V\subset U$ the section space
$\scrJ_k^{\varphi}(V)$ gives rise to a sheaf $\scrJ_k^{\varphi}$ on $U$. Moreover, using transition maps between
singular charts one shows that for a second singular chart
$(\psi,\psi^\sharp): (U,\calO|_U) \to (\R^m, \calC^\infty_{\R^m})$ of $X$ over $U$ the sheaves
$\scrJ_k^\varphi$ and $\scrJ_k^\psi$ over $U$ coincide. Hence they glue together to a global sheaf over
$X$ which we denote by $\scrJ_k$ and which by construction is a subsheaf of $\hscrC_k (\scrA)$.
As in the proof of Lemma \ref{Lem:HochschildBdryMapsIdealToIdeal} one shows that the Hochschild boundary $b$ maps
$\scrJ_k$ to $\scrJ_{k-1}$ hence we obtain a monomorphism of sheaf complexes
$\big( \scrJ_\bullet  , b\big) \hookrightarrow \big( \hscrC_\bullet (\scrA) , b  \big)$.  

\begin{proposition}\label{prop:isomorphism-hochschild-chain-complex-diagonal-complex}
  Assume to be given a sheaf of H-unital bornological algebras $\scrA$ over the
  homologically regular differentiable space $(X,\calO)$.
  Let $\big( \scrE_\bullet  , \beta \big)$ be its \emph{diagonal sheaf complex}
  that means let it be the unique quotient making the following  sequence of chain complexes of sheaves exact: 
  \begin{equation}
    0 \longrightarrow \big( \scrJ_\bullet  , b\big) \longrightarrow \big( \hscrC_\bullet (\scrA) , b  \big)
    \overset{p}{\longrightarrow} \big( \scrE_\bullet  , \beta \big)  \longrightarrow 0 \: .
  \end{equation}
  Denote by $A$  the global section algebra $\Gamma (X,\scrA)$.
  Then the canonical chain map $C_\bullet (A) \to \hscrC_\bullet (\scrA) (X)$  
  and the projection $p_X :\hscrC_\bullet (\scrA) (X) \to E_\bullet (A) = \scrE_\bullet (X)$ are quasi-isomorphisms.
\end{proposition}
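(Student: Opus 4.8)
The plan is to transplant the five-step proof of Theorem~\ref{thm:projection-diagonal-complex-qism} from the structure sheaf $\calC^\infty_X$ to the sheaf $\scrA$, to which it reduces when $\scrA=\calC^\infty_X$. Set $A=\Gamma(X,\scrA)$, so that $C_\bullet(A)=\scrC_\bullet(\scrA)(X)$, and write $J_\bullet(A)\subset C_\bullet(A)$ for the subcomplex of Hochschild chains on $X$ that are flat on the diagonal, defined in a singular chart exactly as $\scrJ_\bullet$ in \eqref{eq:flat-sections-diagonal-sheaf-bornological-algebra}. First I would run the analogue of Step~1: by \textup{(BA1)} the sheaves $\hscrC_k(\scrA)$ and $\scrJ_k$ are modules over the restriction $\calC^\infty_{|\Delta^{k+1}_X}$ of the structure sheaf of $X^{k+1}$ to the diagonal, which is a fine sheaf, hence they are fine, in particular soft. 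Thus the global section functor is exact on the defining short exact sequence and $\scrE_\bullet(X)=\hscrC_\bullet(\scrA)(X)/\Gamma(X,\scrJ_\bullet)$; and since the diagonal quotient depends only on germs along $\Delta^{k+1}_X\cong X$, which is covered by the $\Delta^{k+1}_{U_i}$ of a chart atlas, the same argument as for the structure sheaf --- now with partitions of unity for the $\scrA$-coefficients --- shows that the presheaf quotient $U\mapsto\scrC_k(\scrA)(U)/J_k(U)$ is already the sheaf $\scrE_k$, so $\scrE_\bullet(X)=C_\bullet(A)/J_\bullet(A)$ as well. Hence both $C_\bullet(A)\to\hscrC_\bullet(\scrA)(X)$ and $p_X$ sit in a commuting triangle over $\scrE_\bullet(X)$, and by two-out-of-three the proposition reduces to the contractibility of the two subcomplexes $J_\bullet(A)\subset C_\bullet(A)$ and $\Gamma(X,\scrJ_\bullet)\subset\hscrC_\bullet(\scrA)(X)$.

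To build the contractions I would import the localization machinery. As in Step~2 of that proof, fix a proper embedding of $(X,\calO)$ into $(\R^\infty,\calC^\infty_{\R^\infty})$, take the associated function $d^2\in\calC^\infty(X\times X)=\calC^\infty(X)\hatotimes\calC^\infty(X)$, and form the cutoff functions $\Psi_{k,i,t}\in\calC^\infty(X^{k+1})$ of \eqref{eq:cutoff-functions-diagonal}. Using the $\calC^\infty_X$-module structure on $\scrA$ together with the continuity conditions \textup{(BA2)}, \textup{(BA3)} and nuclearity, each $\Psi_{k,i,t}$ --- a finite product of functions $\varrho_t(d^2(x_j,x_{j+1}))$ --- acts by a continuous degree-zero operator on $\scrA(U)^{\hatotimes k+1}$, compatibly with restriction, so for every $t>0$ one obtains a chain map $\Psi_{\bullet,t}$ on $C_\bullet(A)$ and on $\hscrC_\bullet(\scrA)(X)$ with $\Psi_{k,t}c$ supported in a $\tfrac t2$-neighborhood of the diagonal. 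The delicate point, and the one I expect to be the main obstacle, is the analogue of Step~3: the homotopy $bH_{k,t}+H_{k-1,t}b=\id-\Psi_{k,t}$ of \eqref{EqAlgHom} is assembled in \eqref{eq:homotopy-components}--\eqref{EqAlgHom} from the degeneracy maps \eqref{eq:degenarcy-maps}, which insert a unit. Since $\scrA$ is only H-unital, one must instead feed in a bounded contracting homotopy of the bar complex of $\scrA(U)$; such a homotopy exists because by \textup{(BA5)} that bar complex is acyclic and, being a complex of nuclear LF-spaces with bounded differentials, its acyclicity upgrades to bounded contractibility in the bornological framework of \cite{MeyACH}. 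Alternatively one can pass to the unitalisation $\scrA^+=\scrA\oplus\calC^\infty_X$, which is a unital sheaf of bornological algebras over $X$, run the unital argument there, and descend to $\scrA$ using that the extension $0\to\scrA\to\scrA^+\to\calC^\infty_X\to0$ satisfies excision in Hochschild homology --- H-unitality of $\scrA$ being exactly the hypothesis that makes this descent valid. Making either route compatible with the cutoff functions and with the restriction maps, uniformly in $t$, is the heart of the matter.

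With $H_{\bullet,t}$ available, Steps~4 and~5 go through as before. I would set $K_{k,t}c=\int_t^1 H_{k,s/2}(\partial_s\Psi_{k,s}c)\,ds$ as in \eqref{eq:homotopies-between-t-values}, so that $bK_{k,t}+K_{k-1,t}b=\Psi_{k,1}-\Psi_{k,t}$ by \eqref{EqTDepContHom}, and then prove that on the flat-on-diagonal subcomplexes one has $\lim_{t\searrow0}\Psi_{k,t}c=0$ and that $c\mapsto\lim_{t\searrow0}K_{k,t}c$ is a continuous operator of degree $+1$. Exactly as in Step~5 of the proof of Theorem~\ref{thm:projection-diagonal-complex-qism}, this is done along the compact exhaustion $C_n=B_n\cap X$ by Euclidean balls, using continuity of the maps $\mu^{C_n}_k$, $\mu^{C_n}_{k,i}$ and the commuting diagrams \eqref{Dia1}, \eqref{Dia2}; the continuity of these maps is \cite[Lem.~4.4]{BraPflHAWFSS}, whose proof needs precisely that $X$ be homologically regular, i.e.\ that $\varphi(U)^{k+1}$ and $\Delta^{k+1}_{\R^n}$ be regularly situated. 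The only extra bookkeeping relative to \cite{BraPflHAWFSS} is that the section spaces are now nuclear LF- rather than Fr\'echet spaces, so the open mapping theorem for LF-spaces replaces Proposition~\ref{Prop:FrechetStructuresOnFunctionSpaces}. This contracts both $J_\bullet(A)$ and $\Gamma(X,\scrJ_\bullet)$; by Step~1 it then follows that $p_X$ is a quasi-isomorphism, that $C_\bullet(A)\to C_\bullet(A)/J_\bullet(A)=\scrE_\bullet(X)$ is one, and hence that $C_\bullet(A)\to\hscrC_\bullet(\scrA)(X)$ is a quasi-isomorphism as well. In summary: the localization geometry (Steps~1, 2, 4, 5) is a faithful transcription of Theorem~\ref{thm:projection-diagonal-complex-qism} with $\calC^\infty$-coefficients replaced by $\scrA$-coefficients, and the one genuinely new ingredient --- the replacement of the unital degeneracy maps in Step~3 by an H-unital contracting homotopy compatible with the bornological structure --- is where I expect the real work to lie.
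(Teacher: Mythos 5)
Your overall strategy for the second quasi-isomorphism coincides with the paper's: transplant the five steps of the proof of Theorem~\ref{thm:projection-diagonal-complex-qism}, with the H-unitality issue resolved by passing to the unitalization $\scrA^+=\scrA\oplus\calO$ and descending via excision --- that ``alternative'' route you mention is in fact the only route the paper takes (its Case~1b, citing \cite[Lemma 2.4]{PflPosTanHochschild}). Your other proposed route is a genuine gap: acyclicity of the bar complex of a nuclear LF-algebra does \emph{not} automatically upgrade to the existence of a \emph{bounded} contracting homotopy, and no such upgrade is available in general; so you should discard that branch rather than present it as an option. A second concrete gap is the globalization. The paper's contraction of $\scrJ_\bullet$ is first carried out only in the affine case ($X$ closed in $\R^n$), where the density of elements $\sum_i f_ic_i$ with $f_i\in\calJ^\infty(\Delta^{k+1}_{\R^n},\R^{(k+1)n})$ in $\scrJ_k(X)$ --- built into the definition \eqref{eq:flat-sections-diagonal-sheaf-bornological-algebra} --- is what lets one take the limit $t\searrow 0$ using only the known behaviour of $\Psi_{k,t}$ on flat \emph{functions} plus continuity of the $\calC^\infty$-action (BA3). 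Your appeal to \cite[Lem.~4.4]{BraPflHAWFSS} along a compact exhaustion is an argument about Whitney-field topologies on function spaces and does not directly apply to chains in $\scrA(U)^{\hatotimes k+1}$. The paper then globalizes by choosing a locally finite atlas of singular charts, a subordinate partition of unity, and gluing the local contracting homotopies $H^i_\bullet$ weighted by cutoffs $\Psi_{l,\varepsilon_i}$; your proposal works globally with the $\R^\infty$-embedding throughout and never performs this gluing, which is needed because $\scrJ_k$ is defined chart by chart.

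Where you genuinely diverge is the first quasi-isomorphism $C_\bullet(A)\to\hscrC_\bullet(\scrA)(X)$. The paper disposes of it in one line by citing \cite[Prop.~2.7]{PflPosTanHochschild}. You instead propose a two-out-of-three argument through the common quotient $\scrE_\bullet(X)$, which requires proving that the composite $C_k(A)\to\scrE_k(X)$ is surjective with kernel the flat-on-diagonal subcomplex $J_k(A)$, i.e.\ that the presheaf quotient is already the sheaf $\scrE_k$ at the level of global sections. You assert this but do not prove it; establishing it amounts to essentially the same sheaf-versus-presheaf comparison that the cited Prop.~2.7 encapsulates, so your route does not actually avoid that input --- it relocates it. If you supply that comparison (a partition-of-unity argument concentrating chains near the diagonal), your two-out-of-three scheme is valid and arguably more self-contained than the paper's; as written, it is a missing step.
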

\begin{proof}
  Proposition 2.7 in \cite{PflPosTanHochschild} says that the chain map $C_\bullet (A) \to \hscrC_\bullet (\scrA) (X)$ is
  a quasi-isomorphism, so it remains to show that the
  projection $p_X :\hscrC_\bullet (\scrA) (X) \to E_\bullet (A)$ is a quasi-isomorphism. 
  We only sketch the argument because it is analogous to the proof of
  Theorem \ref{thm:projection-diagonal-complex-qism}.

  \textit{Case 1.} Let us first consider the local case that is the case where $X$ is assumed to be a closed
  subspace of some Euclidean space $\R^n$. The global section algebra $\calO (X)$ then coincides
  with the quotient of $\calC^\infty (\R^n)$ by a closed ideal $I$, and the space $X$ can be identified
  with  the real spectrum of the Fr\'echet algebra $\calO (X) = \calC^\infty (\R^n)/I$.
  Recall the definition of the cutoff functions $\Psi_{k,i,t} \in \calC^\infty(\R^{n(k+1)})$
  for $k\in \N$, $i=0,\ldots,k$ and $t>0$ by  Eq.~\ref{eq:cutoff-functions-diagonal}. Again put 
  $\Psi_{k,t} := \Psi_{k,k,t}$. 
  We will regard the functions $\Psi_{k,i,t}$ and $\Psi_{k,t}$ both as smooth functions on $\R^{n(k+1)}$
  and as elements of $\calO (X)^{\hatotimes (k+1)} =  \calO(X^{k+1})$. 

  \textit{Case 1a.}  After these preparations, we assume that $\scrA$ is a sheaf of unital bornological
  algebras over $(X,\calO)$. Then the degeneracy maps $s_{k,i}: C_k (A) \to C_{k+1}(A)$ defined
  by Eq.~\ref{eq:degenarcy-maps} exist for the global section algebra $A$ and extend to
  sheaf morphisms $s_{k,i}: \hscrC_k (\scrA) \to \hscrC_{k+1}(\scrA)$.
  By the natural action of $\calO(X^{k+1})$ on $C_k (A)$ we obtain for each $t>0$ a chain map
  \[
   \Psi_{\bullet,t} : C_\bullet (A) \to C_\bullet(A) , \:  C_k (A) \ni c \mapsto \Psi_{k,t} \cdot c
  \]
  as in \emph{Step 2} of the proof of Theorem \ref{thm:projection-diagonal-complex-qism}.
  \emph{Steps 3} to \emph{5} in that proof also extend. 
  In particular we have $\calO(X)$-module maps
  $\eta_{k,i,t}: C_k (A)\to C_{k+1} (A)$ defined by Eq.~\ref{eq:homotopy-components} and a graded map 
  $H_{\bullet,t} : C_\bullet (A) \to C_\bullet(A)$ of degree $1$ defined by
  $H_{k,t} = \sum_{i=1}^{k+1} (-1)^{i+1} \eta_{k,i,t}$. Since Eq.~\ref{EqAlgHom} then holds 
  as well, $H_{\bullet,t}$ is a homotopy between the identity and the localization morphism $\Psi_{\bullet,t}$.
  Next define $K_{k,t}: C_k(A) \to C_{k+1}(A)$ as in Eq.~\ref{eq:homotopies-between-t-values}
  and observe that like in \emph{Step 4} one proves that $K_{\bullet,t}$
  provides a homotopy between the localizations $\Psi_{\bullet,1}$ and $\Psi_{\bullet,t}$.
  Finally we want to show that the limit $\lim_{t\searrow 0} K_{\bullet,t}$ exists and provides a homotopy between
  the localization $\Psi_{\bullet,1}$ and $0$.
  To this end recall from \emph{Step 5} in the proof of
  Theorem \ref{thm:projection-diagonal-complex-qism}  that for
  $f \in \scrJ^\infty (\Delta^{k+1}_{\R^n}, \R^{(k+1)n})$ the limit  $\lim_{t\searrow 0} \Psi_{k,t} f$
  exists and vanishes. Now assume to be given $c \in \scrJ_k(X)$ of the form
  $c =\sum_{i=1}^l f_i c_i$ with $f_i \in \scrJ^\infty (\Delta^{k+1}_{\R^n}, \R^{(k+1)n})$ and
  $c_i \in \hscrC_k (\scrA) (X)$. Then  $\lim_{t\searrow 0} \Psi_{k,t} c = 0$
  and the limit $K_k c = \lim_{t\searrow 0} K_{k,t} c $
  exists. The latter is even continuous in $c$. Since the space of $c\in \scrJ_k(X)$ of the given form
  is dense in $ \scrJ_k(X)$ by Eq.~\ref{eq:flat-sections-diagonal-sheaf-bornological-algebra},
  the operator  
  \[
    K_k : \scrJ_k(X) \to \scrJ_{k+1} (X), \: c \mapsto \lim_{t\searrow 0} K_{k,t}c 
  \]
  therefore is well-defined and continuous. By construction, it is a homotopy between the localization $\Psi_{\bullet,1}$
  and $0$. Hence the chain complex $\scrJ_\bullet (X)$ is acyclic, and $p_X$ is a quasi-isomorphism as claimed.

  \textit{Case 1b.} Now we come to the case where $\scrA$ is not necessarily unital, but still assumed to be
  an H-unital bornological sheaf of algebras. Consider the direct sum $\scrA \oplus \calO$ and denote it by
  $\scrA^+$. Since $\scrA$ is a $\calO$-module sheaf and $\calO$ is a sheaf of unital algebras,
  $\scrA^+$ inherits in antural way the structure of a sheaf of unital algebras such that the following sequence
  of sheaves is split exact.
  \begin{equation*}
    0 \longrightarrow \scrA \longrightarrow \scrA^+ \longrightarrow \calO \longrightarrow 0
  \end{equation*}
  By \emph{Case 1a}, the chain complex $\hscrC_\bullet (\scrA^+) (X)$ is
  quasi-isomorphic to the diagonal complex $E_\bullet (A^+) = \hscrC_\bullet (\scrA^+) (X) / \scrJ_\bullet^+(X)$,
  where $A^+ =\scrA^+ (X)$ and where the sheaf complex $\scrJ_\bullet^+$ is defined by
  Eq.~\ref{eq:flat-sections-diagonal-sheaf-bornological-algebra} with $\scrA$ replaced by $\scrA^+$.
  Since $A =\scrA (X)$ is H-unital and $E_\bullet (A^+) \cong E_\bullet (A) :=  \hscrC_\bullet (\scrA) (X)/ \scrJ_\bullet(X)$,
  the chain complex $\hscrC_\bullet (\scrA) (X)$ has to be 
  quasi-isomorphic to the diagonal complex $E_\bullet (A)$ which finalizes \emph{Case 1}. 
  See \cite[Lemma 2.4.]{PflPosTanHochschild} for further details in the last argument.

  \textit{Case 2.} It remains to prove the claim when $(X,\calO)$ is not necessarily affine. We only consider
  the unital case, the H-unital one can be treated is as in \emph{Case 1b}.
  Choose a family $(\varphi_i,\varphi_i^\sharp)_{i\in I}$ of singular charts of $(X,\calO)$
  such that the family of its domains $(U_i)_{\in I}$ is a locally finite cover of $X$ and such that each $U_i$ is
  relatively compact. Let $\R^{n_i}$ be the range of $\varphi_i $.
  Choose a partition of unity $(\psi_i)_{i\in I}$ subordinate partition subordinate to the cover $(U_i)_{i\in I}$ 
  and such that the support of each cutoff function $\psi_i$ is compact.
  Recall from \emph{Step 2} in the proof of Theorem \ref{thm:projection-diagonal-complex-qism} that the cutoff functions
  $\Psi_{k,t}$ can be globally defined on $X$ by embedding $X$ into the colimit  $(\R^\infty,\calC^\infty_{\R^\infty})$.
  Now fix a degree $k$ and choose $\varepsilon_i > 0$ such that 
  $\supp \Psi_{k,\varepsilon_i} \cap \supp \varphi \times X^k $ is relatively compact in $U_i^{k+1}$.  
  By \emph{Case 1}, there exists for each $i$ a contracting homotopy
  $H_{\bullet}^i : \scrJ^{\varphi_i}_\bullet (U_i) \to \scrJ^{\varphi_i}_\bullet (U_i) $.
  Put for $l\leq k$
  \[
    H_l = \sum_{i\in I} H_{l}^i \varphi_i \Psi_{l,\varepsilon_i} : \scrJ_k (X) \to \scrJ_{k+1} (X) \ . 
  \]
  The equality 
  \[
    b H_l c+ H_l b c= c   
  \] 
  then holds for all $c\in  \scrJ_l (X)$ and $l\leq k$, hence $\scrJ_\bullet (X)$ is contractible and $p_X$
  a quasi-isomorphism.    
\end{proof}


\section{Applications}
In this section we provide several examples of differentiable spaces and sheaves of bornological
algebras where our method of localization can be applied and used to determine the corresponding
Hochschild homology groups.

\subsection{Whitney fields}\label{sec:whitney-fields}
Let $X \subset \R^n$ be a regular locally closed subset with regularly
situated diagonals that is $X^k$ and $\Delta^k_{\R^n}$ are regularly situated for all positive $k$.
Denote for every relatively open $U\subset X$
by $\calE^\infty (U)$ the algebra of Whitney fields over $U$. Then $(X,\calE^\infty)$ is a
differentiable space with global sections being the Whitney fields over $X$.
For each $x\in X$, the stalk of the sheaf $\hscrC_\bullet (\calE)$ at $x$ coincides with
the tensor product of the stalk $\calE^\infty_x$ with the Hochschild chain complex
$C_\bullet (\calF_n)$ of the algebra $\calF_n$ of formal power series in $n$ variables. 
The Hochschild homology of $\calF_n$ coincides in degree $k$ with $\calF_n \otimes \Lambda^k\R^n$
as one shows for example by using the Connes--Koszul resolution for $\calF_n$. By the
localization result Theorem \ref{thm:isomorphism-homology-localization-hyperhomology}
this entails that the Hochschild homology of $\calE^\infty (X)$ is given by
\[
  HH_k\big( \calE^\infty (X)\big) \cong \Omega^k_{\calE^\infty} (X) := \calE^\infty(X) \otimes \Lambda^k\R^n \ . 
\]
We thus have recovered \cite[Thm.~4.8]{BraPflHAWFSS}. 

\subsection{Convolution algebras}
\label{subsec:convolution-algebras}
Let $\sfG \rightrightarrows M$ be a proper Lie groupoid. Denote by $s,t$ its source and target map, respectively,
and by $\pi : M \to X:= M/\sfG $ the canonical projection onto the space of orbits. Note that $X$ is a differentiable 
space by \cite{PflPosTanGOSPLG}. Its structure sheaf $\calC^\infty_X$ has section spaces $\calC^\infty_X (U)$ over $U\subset X$ open
given by all continuous functions $f:U\to \R$ such that the pullback $\pi^*f$ is smooth on $M$. 
Given a subset $C \subset X$ put $C^0 := \pi^{-1} (C)$ and $C^1 := s^{-1} (C^0)$ 
and call them the saturations of $C$ in $M$ and $\sfG$, respectively. 
A subset $K\subset \sfG$ then is said to be \emph{longitudinally compact} if  
for every compact $C\subset X$ the intersection $K$ with the saturation $C_1$ is compact in $\sfG$.
For each open $U\subset X$  we now define the \emph{convolution algebra} of $\sfG$ over $U$ by 
\[
 \scrA (U) = \{ f\in \calC^\infty (U_1) \mid 
 \supp (f) \text{ is longitudinally compact} \} \ .
\]
The product on $\scrA (U)$ is the \emph{convolution product} given by
\[
  f_1 \ast f_2 (g) = \int_{\sfG(t(g),-)} f_1(h) f_2(h^{-1}g) d\lambda^{t(g)} ,
  f_1,f_2\in \scrA (U), \: g\in U^1,
\]
where $(\lambda^x)_{x\in M}$ is a smooth left Haar system on $\sfG$.
One verifies that one thus obtains a sheaf $\scrA$ of H-unital bornological algebras over the orbit space $X=M/\sfG$
\cite{PflPosTanHochschild}. The sheaf $\scrA$ is called the \emph{convolution sheaf} of the groupoid $\sfG$ and is sometimes denoted more descriptively by $\scrA_\sfG$.
Its algebra of global sections  $A = \Gamma (X,\scrA_\sfG )$ is the
\emph{smooth convolution algebra} of $\sfG$. The localization result
Theorem \ref{thm:isomorphism-homology-localization-hyperhomology} applies
and gives rise to an isomorphism 
\[
   HH_\bullet (A) = \Gamma \left( X,\scrH\!\scrH_\bullet (\scrA_\sfG)\right) \ .
\]
The stalks of the sheaf $\hscrC_\bullet (\scrA)$ of Hochschild chains have been identified
in \cite[Sec.~3]{PflPosTanHochschild}. More precisely, it has been shown in
\cite[Prop.~3.5]{PflPosTanHochschild} that for every orbit
$x = \sfG p \in X$ through a point $p\in M$ there exists a quasi-isomorphism
\begin{equation}
  \label{eq:qusims-stalks-convolution-algebras-action-groupoids}
  L_{\bullet,x} : \hscrC \big(\calA_\sfG\big)_{\bullet,x} \to
  \hscrC \big(\calA_{G_p\ltimes N_p}\big)_{\bullet,0} \ ,
\end{equation}
where $G_p$ denotes the isotropy group of $\sfG$ at $p$, $N_p$ its normal space, and
$\calA_{G_p\ltimes N_p}$ the convolution sheaf of the action groupoid $G_p\ltimes N_p$.
Brylinski conjectured in \cite{BryAAGAH,BryCHET} that the Hochschild homology of the
convolution algebra $\calA_{G\ltimes M}$ of a compact Lie group action on $M$ coincides with the so-called
\emph{basic relative forms} on the inertia space $\Lambda_0 (G\ltimes M)$.
The conjecture has been proved for $S^1$-actions in \cite[Thm.~6.10]{PflPosTanHochschild}, but even
though the conjecture is believed to be correct a complete proof is still lacking.
Together with the quasi-isomorphisms
\ref{eq:qusims-stalks-convolution-algebras-action-groupoids}, verification of Brylinski's
conjecture would allow to determine the Hochschild and possibly even the cyclic homology of
proper groupoids.

\subsection{Differentiable stratified spaces}

Let $(X,\calC^\infty)$ be a homologically regular differentiable stratified space
and assume that $X$ is endowed with a (minimal) Whitney stratification compatible with the
differentiable structure $\calC^\infty_X$ which means that for every stratum $S\subset X$ the canonical
embedding $S\hookrightarrow X$ or in other words that
$(S,\calC^\infty_S)\hookrightarrow (X,\calC^\infty_X)$ is an embedding of differentiable spaces.
Examples of such spaces are real or complex algebraic varieties, real analytic varieties,
semialgebraic or subanalytic sets, orbit spaces of compact Lie group actions and orbit spaces of proper Lie groupoids. Unless $X$ is smooth, the Hochschild homology theory of the global section algebra
$A=\calC^\infty (X)$ is in general not known. Localization methods as those in
Section \ref{sec:diagonal-complex} appear to be helpful for making progress in understanding the
cyclic homology theory of such space and for obtaining at least a qualitative picture. 
By Theorem \ref{thm:projection-diagonal-complex-qism}, the Hochschild homology of $A$ can be computed
by the homology of the diagonal complex $E_\bullet (A)$.
The homology of $E_\bullet (A)$ coincides with the global section space of the homology sheaf
$\scrH_\bullet \big(\scrE_\bullet (\calC^\infty_X)\big)$. Denote by $X^\circ\subset X$ the 
subspace consisting of the union of all strata of depth $0$. Then $X^\circ$ is open, dense and
smooth, hence its Hochschild homology is known to be isomorphic to the graded space of
differential forms on $X^\circ$ that is
\[
  HH_\bullet (\calC^\infty (X^\circ)) \cong
  \scrH_\bullet \big(\scrE_\bullet (\calC^\infty_{X^\circ})\big) \cong
  \Omega^\bullet (X^\circ) \ . 
\]
The question arises, how the Hochschild homology sheaf
$\scrH\scrH_\bullet (\calC^\infty_X) := \scrH_\bullet \big(\hscrC (\calC^\infty_X)\big)$
looks over a stratum $S$ in the singular locus $X_{\textup{sing}}$ of $X$.
The following appears to be a reasonable conjecture. 
\begin{conjecture}
  Assume to be given a homologically regular reduced differentiable stratified space $(X,\calC^\infty)$, possibly
  from an allowable class like locally semialgebraic differentiable spaces or
  orbit spaces of proper Lie groupoids. 
  Then the Hochschild homology sheaf $\scrH\scrH_\bullet (\calC^\infty_X)|_S$ restricted
  to a stratum $ S\subset X$ is locally free, and the strata are maximal
  subsets of $X$ with that property. 
\end{conjecture}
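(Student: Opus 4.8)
We indicate a possible route towards this conjecture, combining the localization results of Section~\ref{sec:diagonal-complex} with the local product structure that Whitney stratified spaces, orbit spaces of proper Lie groupoids, and locally semialgebraic spaces enjoy along their strata. The first step is to pass to stalks. By Theorem~\ref{thm:isomorphism-homology-localization-hyperhomology}, applied to $\scrA=\calC^\infty_X$, the Hochschild homology sheaf $\scrH\scrH_\bullet(\calC^\infty_X)=\scrH_\bullet\big(\hscrC_\bullet(\calC^\infty_X)\big)$ agrees with the homology sheaf $\scrH_\bullet(\scrE_\bullet)$ of the fine diagonal sheaf complex, so its stalk at a point $x$ is the homology of the stalk complex $\scrE_{\bullet,x}$, a germ version of the Hochschild homology of $\calC^\infty_{X,x}$. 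Local freeness of $\scrH\scrH_\bullet(\calC^\infty_X)|_S$ along a stratum $S$ thus amounts to showing that, locally on $S$, the restricted diagonal sheaf complex $\scrE_\bullet|_S$ is quasi-isomorphic to the pullback from $S$ of a complex whose cohomology is a graded locally free $\calC^\infty_S$-module.

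The second step is to invoke local triviality of the stratification. For $x\in S$ there should be an open neighborhood $U$ and an isomorphism of reduced differentiable spaces $(X\cap U,\calC^\infty_X)\cong (S\cap U)\times C(L_x)$, where $L_x$ is the link of $S$ at $x$, $C(L_x)$ its open cone, and $S\cap U$ corresponds to $(S\cap U)\times\{\mathrm{apex}\}$; after shrinking $U$ one may assume that $L_x$ is one fixed link $L$ over each connected component of $S$. Such a conical trivialization, respecting the differentiable structures, is available in each of the listed classes: from Hardt's triviality theorem followed by a smoothing in the semialgebraic and subanalytic case, from the slice and linearization theorem underlying \cite{PflPosTanGOSPLG} for orbit spaces of proper Lie groupoids, and from Thom's first isotopy lemma with suitably smooth control data for a general Whitney stratified differentiable space. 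Using then $\calC^\infty_X(X\cap U)\cong\calC^\infty(S\cap U)\hatotimes\calC^\infty_{C(L)}(C(L))$, the Hochschild--Kostant--Rosenberg theorem on the smooth factor, and a Künneth formula for the continuous Hochschild homology of nuclear Fr\'echet algebras, one obtains a quasi-isomorphism of sheaf complexes on $S\cap U$
\[
  \scrE_\bullet(\calC^\infty_X)\big|_{S\cap U}\ \simeq\ \Omega^\bullet_{S\cap U}\,\hatotimes\,\scrE_\bullet\big(\calC^\infty_{C(L)}\big)_{\mathrm{apex}} \ ,
\]
hence $\scrH\scrH_\bullet(\calC^\infty_X)|_{S\cap U}\cong\Omega^\bullet_{S\cap U}\hatotimes_\R V_L$ with $V_L:=\scrH\scrH_\bullet\big(\calC^\infty_{C(L)}\big)_{\mathrm{apex}}$ a fixed graded $\R$-vector space, the $\calC^\infty_S$-module structure being the one on the smooth factor. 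Since $\Omega^\bullet_S$ is a locally free $\calC^\infty_S$-module, namely the exterior bundle of $T^*S$, tensoring with the fixed $V_L$ preserves local freeness, which establishes the first half of the conjecture --- modulo the auxiliary check that each $V_L$ is of locally finite type, so that ``locally free'' is not vacuous; for Lie-groupoid links this should follow from \cite{PflPosTanHochschild}, for semialgebraic links it is part of what has to be verified.

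The maximality of the strata is the hard part and the reason the statement is phrased as a conjecture. One has to show that $\scrH\scrH_\bullet(\calC^\infty_X)$ fails to be locally free on any subset of $X$ properly containing a stratum, equivalently that the graded fiber dimension --- the Poincar\'e polynomial of $\scrH\scrH_\bullet(\calC^\infty_X)_x\big/\mathfrak{m}_x\,\scrH\scrH_\bullet(\calC^\infty_X)_x$ --- jumps across every stratum boundary. By the product structure of the second step this reduces to a statement about cones: for $x'\in S'\subset\overline{S}\setminus S$ the fiber is $\Lambda^\bullet T^*_{x'}S'\otimes V_{L'}$, and one must separate its Poincar\'e polynomial $(1+q)^{\dim S'}P_{V_{L'}}(q)$ from the polynomial $(1+q)^{\dim S}P_{V_L}(q)$ attached to $S$ near $x'$. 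The genuinely open point is to show that the apex fiber $V_L$ of a cone $C(L)$ is strictly larger than the fiber $\Lambda^\bullet\R^{\,\dim C(L)}$ at a smooth point of the same dimension, unless $L$ is a sphere, that is unless the apex is already a smooth point of $X$. I would try to detect this through the failure of the Hochschild--Kostant--Rosenberg map $\Omega^\bullet_{\calC^\infty_X}\to\scrH\scrH_\bullet(\calC^\infty_X)$ to be an isomorphism at singular points, ideally already in low homological degree. Making this rigorous requires a far better hold on the Hochschild homology of cones over general links than is currently available --- exactly the type of computation the convolution-algebra conjecture of Section~\ref{subsec:convolution-algebras} would feed into --- so I expect this semicontinuity-and-strict-jump analysis to be the main obstacle.
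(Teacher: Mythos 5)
This statement is a \emph{conjecture} in the paper; no proof is given there, so there is nothing of the author's to compare your argument against, and what you have written is, as you yourself acknowledge, a proposed route rather than a proof. The first step of your route --- replacing $\scrH\scrH_\bullet(\calC^\infty_X)$ by the homology sheaf of the fine diagonal sheaf complex and reducing to stalks --- is exactly the intended use of Theorems \ref{thm:projection-diagonal-complex-qism} and \ref{thm:isomorphism-homology-localization-hyperhomology}, and that part is sound.

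The genuine gaps lie in the second and third steps. The local conical trivialization you invoke (Thom--Mather isotopy, Hardt triviality, slice theorems) produces in general only a stratified \emph{homeomorphism} $X\cap U \approx (S\cap U)\times C(L)$; it is not an isomorphism of reduced differentiable spaces, since control data and (semi)algebraic trivializations need not be smooth with respect to the structure sheaves in question. Without such an isomorphism the factorization $\calC^\infty_X(X\cap U)\cong\calC^\infty(S\cap U)\hatotimes\calC^\infty_{C(L)}(C(L))$ and the ensuing K\"unneth decomposition are unjustified, and establishing even the weaker statement that the stalks of $\scrH\scrH_\bullet(\calC^\infty_X)$ are constant along $S$ up to the twist by $\Lambda^\bullet T^*S$ is essentially the content of the first half of the conjecture, not a lemma one may quote. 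You would further need the apex fiber $V_L$ to be of finite (or at least controlled) type for ``locally free'' to be meaningful, which is known only in special cases such as Whitney fields (Section \ref{sec:whitney-fields}) and the $S^1$-orbit spaces treated in the cited work on convolution algebras. Finally, the maximality claim --- a strict jump of the graded fiber across every stratum boundary --- is, as you correctly identify, entirely open; your idea of detecting it through the failure of the Hochschild--Kostant--Rosenberg map at singular points is a reasonable heuristic but not an argument. In short: a sensible research plan consistent with the paper's philosophy, but not a proof, and the paper does not claim one either.
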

If the conjecture holds true, the stratification theory of a differentiable space from
an allowable class could be detected by its Hochschild homology. In particular, Hochschild homology
would serve as an invariant of singularity types. To the authors knowledge this observation
has not been made in cyclic homology theory yet and could lead to significant progress in the
understanding of singularities from the point of view of noncommutative geometry. 
Strong indication for that philosophy is the work by Michler \cite{MicTDAQHH}, where it has been
shown that for complex isolated singularities in quasi-homogeneous complex hypersurfaces the
Milnor number is recovered by its cyclic homology. 
We have another related conjecture. 

\begin{conjecture}
  The diagonal sheaf complex $\scrE_\bullet (\calC^\infty_X)$ and the Hochschild homology sheaf
  $\scrH\scrH_\bullet (\calC^\infty_X)$
  associated to a homologically regular reduced
  differentiable space $(X,\calC^\infty)$ are quasi-coherent. 
\end{conjecture}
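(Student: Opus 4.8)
The plan is to verify the local condition defining quasi-coherent sheaves of $\calC^\infty_X$-modules in the theory of differentiable spaces (see \cite{NavSanDS} and \cite{PflAGSSS}): a sheaf $\scrM$ of $\calC^\infty_X$-modules is quasi-coherent if, on an atlas of affine singular charts, the natural morphisms $\scrM(U)\hatotimes_{\calC^\infty(U)}\calC^\infty(V)\to\scrM(V)$ are isomorphisms for all opens $V\subset U$ in the atlas (here $\hatotimes$ is the appropriate completed tensor product). Since quasi-coherence is a local property and every differentiable space admits such an atlas, I would first reduce to the case that $X=\subspX\subset\R^n$ is a locally closed subspace with regularly situated diagonals carrying a fixed singular chart, and then run the argument on the powers $X^{k+1}$, whose diagonal $\Delta_X^{k+1}$ is canonically identified with $X$. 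Note that $\scrE_\bullet(\calC^\infty_X)$ is a complex of sheaves of $\calC^\infty_X$-modules: the sheaf property of each $\scrE_k$ is Step~1 of the proof of Theorem~\ref{thm:projection-diagonal-complex-qism}, the $\calC^\infty_X$-action on $\scrE_k$ is the one descending from the action of $\calC^\infty(X)$ on the first tensor factor of $\hscrC_k(\calC^\infty_X)(X)=\calC^\infty(X)^{\hatotimes(k+1)}=\calC^\infty(X^{k+1})$, and the $\calC^\infty_X$-linearity of $\beta$ for this structure uses precisely that $\calC^\infty(X)$ is commutative. It therefore suffices to prove that each $\scrE_k$ is quasi-coherent.

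I would split this into two steps. The first is a reduction. Since $\scrE_k(V)=\calC^\infty(V^{k+1})/\scrJ_k(V)$, where $\scrJ_k$ is the ideal sheaf of Hochschild chains flat on the diagonal from Proposition~\ref{prop:isomorphism-hochschild-chain-complex-diagonal-complex} (taken with $\scrA=\calC^\infty_X$), and since this quotient only depends on the germ of $V^{k+1}$ along $\Delta_V^{k+1}$, one may replace $V^{k+1}$ by the larger open $V\times U^{k}$ without changing it; combining this with the product decomposition $\calC^\infty(U^{k+1})\cong\calC^\infty(U)\hatotimes\calC^\infty(U)^{\hatotimes k}$ and with axiom (DS\ref{ite:ADSlocalization}) (so $\calC^\infty(V)=\calC^\infty(U)_{S_V}$) identifies $\scrE_k(V)$ with the localization of $\calC^\infty(U^{k+1})$ along $\calC^\infty(U)\to\calC^\infty(V)$ modulo the corresponding localization of $\scrJ_k(U)$. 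Hence the quasi-coherence of $\scrE_k$ becomes \emph{equivalent} to the statement that the ideal sheaf $\scrJ_k$ is quasi-coherent, i.e.\ that $\scrJ_k(V)=\scrJ_k(U)\cdot\calC^\infty(V^{k+1})$ (in the completed sense) for $V\subset U$. The second step is to prove exactly this; here the homological regularity of $X$ enters, in the same role as in the proof of Theorem~\ref{thm:projection-diagonal-complex-qism} and in \cite[Lem.~4.4]{BraPflHAWFSS}. The hypothesis that $X^{k+1}$ and $\Delta_{\R^n}^{k+1}$ are regularly situated is what allows one to transport flat Whitney data on $\Delta_U^{k+1}$ to $\Delta_V^{k+1}$ and back, via the Whitney-type extension and division estimates of \cite{MalIDF,TouIFD,BieMilPawCDF} and careful control of the Fr\'echet seminorms from Proposition~\ref{Prop:FrechetStructuresOnFunctionSpaces}.

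Granting the second step, the first step shows that each $\scrE_k$ is quasi-coherent, so $(\scrE_\bullet,\beta)$ is a complex of quasi-coherent $\calC^\infty_X$-modules. Since the restriction homomorphisms $\calC^\infty_X(U)\to\calC^\infty_X(V)$ are localizations (axiom (DS\ref{ite:ADSlocalization})) and remain flat after completion, the category of quasi-coherent $\calC^\infty_X$-modules is an abelian subcategory of all $\calC^\infty_X$-modules, closed under kernels and cokernels; consequently the homology sheaves $\scrH_\bullet\big(\scrE_\bullet(\calC^\infty_X)\big)$ are quasi-coherent. Finally, applying the proof of Theorem~\ref{thm:projection-diagonal-complex-qism} to the affine opens of $X$, which are themselves homologically regular, shows that the subcomplex $\scrJ_\bullet\subset\hscrC_\bullet(\calC^\infty_X)$ has vanishing homology sheaves; hence by Theorems~\ref{thm:projection-diagonal-complex-qism} and \ref{thm:isomorphism-homology-localization-hyperhomology} the projection $\hscrC_\bullet(\calC^\infty_X)\to\scrE_\bullet$ is a quasi-isomorphism of complexes of fine sheaves, so that $\scrH\!\scrH_\bullet(\calC^\infty_X)\cong\scrH_\bullet\big(\scrE_\bullet(\calC^\infty_X)\big)$ is quasi-coherent as well.

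The step I expect to be the main obstacle is the second one: the quasi-coherence of the ideal sheaf of Hochschild chains flat on the diagonal. Flatness on a closed set is a pointwise condition, but what has to be controlled is whether a smooth function on $V^{k+1}$ flat on $\Delta_V^{k+1}$ can be written as a (completed) $\calC^\infty(V^{k+1})$-combination of restrictions of functions flat on $\Delta_U^{k+1}$ — a genuine relative extension/division problem for flat Whitney data attached to the pair $\Delta_X^{k+1}\subset X^{k+1}$, rather than to a single ambient Euclidean neighbourhood, and it will need the regularly-situated hypothesis built into homological regularity. A secondary, more foundational point is the interaction of quasi-coherence with the completed tensor product: one has to decide whether ``quasi-coherent'' is to be understood purely algebraically or refined to respect the bornological/Fr\'echet structures, to check that the completed localizations of $\calC^\infty(U^{k+1})$ and of $\scrJ_k(U)$ agree with what the naive calculation predicts, and to confirm that $(-)\hatotimes_{\calC^\infty(U)}\calC^\infty(V)$ is exact on the relevant nuclear modules. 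I would adopt the topological version throughout and verify these identities directly on the charts.
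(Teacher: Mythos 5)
The statement you are addressing is stated in the paper as a \emph{conjecture}: the author offers no proof, only the remark that Palamodov's coherence theorem for the analytic Hochschild homology sheaf of complex analytic spaces lends it plausibility. So there is no ``paper proof'' to compare against, and the question is whether your proposal actually closes the problem. It does not: it is a reduction, not a proof. You candidly flag the decisive step yourself --- the quasi-coherence of the flat ideal sheaf $\scrJ_k$, i.e.\ the assertion that every $f\in\calC^\infty(V^{k+1})$ flat on $\Delta_V^{k+1}$ is a completed $\calC^\infty(V^{k+1})$-combination of restrictions of elements of $\scrJ_k(U)$ --- and then leave it unproved, gesturing at Whitney--Tougeron division and the regularly-situated hypothesis. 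That is precisely the hard analytic content of the conjecture; reformulating quasi-coherence of $\scrE_k$ as quasi-coherence of $\scrJ_k$ is a legitimate and arguably clarifying move, but it converts the conjecture into an equivalent open division problem for flat ideals under the localization $\calC^\infty(U)\to\calC^\infty(V)$, which neither \cite{MalIDF} nor \cite{TouIFD} settles in the form needed here (those results concern a fixed ambient Euclidean open set, not compatibility with restriction to arbitrary smaller opens of a singular chart).

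Two further steps in your outline also carry unproved analytic content. First, the passage from ``each $\scrE_k$ is quasi-coherent'' to ``the homology sheaves are quasi-coherent'' requires that the functor $(-)\hatotimes_{\calC^\infty(U)}\calC^\infty(V)$ be exact on the relevant nuclear Fr\'echet modules; for completed topological tensor products over $\calC^\infty$-rings this is not formal (ranges of continuous maps need not be closed, and flatness of topological localization is a genuine theorem to be proved, not a category-theoretic triviality). Second, the identification $\scrH\!\scrH_\bullet(\calC^\infty_X)\cong\scrH_\bullet(\scrE_\bullet)$ needs the local acyclicity of $\scrJ_\bullet$ at the level of stalks or of arbitrarily small opens, which is extracted from the proofs of Theorem~\ref{thm:projection-diagonal-complex-qism} and Proposition~\ref{prop:isomorphism-hochschild-chain-complex-diagonal-complex} only for opens that are themselves homologically regular affine pieces; this part is repairable but should be stated as such. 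In short: your architecture is reasonable and would be a sensible strategy for attacking the conjecture, but as written it establishes only that the conjecture follows from an unproved division/flatness statement for flat Whitney ideals, so the statement remains open.
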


For complex analytic spaces, Palamodov has shown in \cite{PalIDQCAS}  that the 
the corresponding analytic Hochschild homology sheaf is coherent which seems to substantiate this
conjecture. 

\subsection{Localization in cyclic homology}
Given a differentiable space $(X,\calO)$ with global section algebra $A= \calO (X)$,
Connes' boundary operator $B : C_k (A) \to C_{k+1} (A)$ does not commute with the
localization maps $\Psi_{\bullet,t}$, $t>0$. Hence, localization in Hochschild homology
cannot be directly transferred to Connes' $(b,B)$-complex which determines cyclic homology.
But the situation is not hopeless since Connes' spectral sequence might still degenerate.
For example, in the case where $A$ is the algebra $\calE^\infty (X)$ of Whitney fields as in
Section \ref{sec:whitney-fields}, the spectral sequence degenerates at the $E^1$-term which allows
for the computation of the cyclic homology of $\calE^\infty (X)$ as in Connes'
original paper \cite{ConNDG}. This approach has been pursued in \cite{BraPflHAWFSS}, and the cyclic homology of
Whitney fields could be identified with
\[
  HC_k\big( \calE^\infty (X)\big) \cong \Omega^k_{\calE^\infty} (X) / d \Omega^{k-1}_{\calE^\infty} (X)
  \oplus H^{k-2}_\textup{WdR} (X) \oplus H^{k-4}_\textup{WdR} (X) \oplus  \ldots \ ,  
\]
where $H^k_\textup{WdR} (X)$ is the $k$-th \emph{Whitney-de Rham cohomology group} of $X$ that is the
$k$-th cohomology group of the cochain complex
$\big( \Omega^\bullet_{\calE^\infty} (X) , d \big) := \big(\calE^\infty (X) \otimes \Lambda^\bullet \R^n , d \big)$.
Note, however, that Connes' spectral sequence need not always degenerate. This has been pointed
out e.g.~in \cite[Rem.~3.8]{BryAAGAH} referring to an example by Block. 

Another approach to localization in cyclic homology comes from the observation that the
localization maps $\Psi_{\bullet,t}$ commute with the cyclic operator $\lambda$. The homotopy
operators $H_{\bullet,t}$ do not have this property, though. Hence the homotopies $H_{\bullet,t}$
do not descend to Connes' cyclic chain complex, and it is not clear whether $\Psi_{\bullet,t}$ defines a
quasi-isomorphism on the cyclic chain complex. To circumvent this obstacle it appears to be promising  
to consider localization within the cyclic bicomplex directly. Progress in this approach is expected to
eventually lead to unravel the cyclic homology theory of differentiable spaces and Lie groupoids.

\bibliographystyle{amsplain}
\bibliography{lmlib,ncgeometry&groupoids} 
\end{document}